\newtheorem{Theorem}{Theorem}[section]
\newtheorem{Lemma}[Theorem]{Lemma}
\newtheorem{Corollary}[Theorem]{Corollary}
\def\V{\mbox{Var}}
\def\R\re
\def\V{\bf V}
\def \re{{\mathbb R}}
\def \0{\lambda_{0}}
\begin{document}
\hyphenation{Ya-ma-be co-rres-pon-ding hy-po-the-sis iso-pe-ri-me-tric gen-er-at-ed man-i-folds me-tric}
\title{Lower bounds for  isoperimetric profiles and Yamabe constants}




\author[J. M. Ruiz]{Juan Miguel Ruiz$^\dagger$}
 \thanks{$^\dagger$ ENES  UNAM. Departamento  de Matem\'aticas. Le\'on, Gto., M\'exico. mruiz@enes.unam.mx}


\author[A. V. Juarez]{Areli V\'azquez Ju\'arez$^{\ddagger *}$}
\thanks{$^\ddagger$ ENES UNAM.  Departamento de Matem\'aticas. Le\'on, Gto., M\'exico. areli@enes.unam.mx.}
\thanks{* Corresponding author.}

\subjclass{ 53C21}

\keywords{Isoperimetric profile, Symmetrization,Yamabe constants}

\begin{abstract}  
	We estimate explicit lower bounds for the isoperimetric profiles of the Riemannian product of a compact manifold and the Euclidean space with the flat metric, $(M^m\times \re^n,g+g_E)$, $m,n>1$. In particular, we introduce a lower bound for the isoperimetric profile of  $M^m\times \re^n$ for regions of large volume and we improve on previous estimates of lower bounds for the isoperimetric profiles of $S^2 \times \re^2$, $S^3 \times \re^2$, $S^2 \times \re^3$. We also discuss some applications of these results in order to improve known lower bounds for the Yamabe invariant of certain manifolds.
	

\end{abstract}

\maketitle

\section{Introduction}
 On  a Riemannian manifold $(M^n,g)$ of volume $Vol^n(M)$, 
we call a closed region $\Omega\subset M$   of volume $t$, $0<t<Vol^n(M)$, an isoperimetric region, if  it is such that its boundary area, $Vol^{n-1}(\partial \Omega)$, is minimal among all compact hypersurfaces $\Sigma \subset M$ enclosing a region of volume $t$.  The isoperimetric  profile of a manifold $(M,g)$ is the function $I_{(M,g)} : (0,Vol^n(M) ) \rightarrow (0,\infty )$ defined by the infimum,

$$I_{(M,g)} (t) = \inf \{ Vol^{n-1}( \partial \Omega ) : Vol^n (\Omega) = t , \Omega \subset M^n, \Omega \mathit{ \ a \ closed \  region} \}.$$

 \noindent Where $Vol^{n}(\Omega)$, the volume of a closed region $\Omega \subset M^n$,  denotes the $n-$dimensional Riemannian measure of $\Omega$. Meanwhile,  $Vol^{n-1}(\partial \Omega)$, the area of its boundary, denotes  the $(n-1)$-dimensional Riemannian measure of the boundary $\partial \Omega$. We remark that the volume of the manifold $M^n$ may be infinite. See \cite{Ros} for more details on the isoperimetric profile and  the isoperimetric   problem in general. 
 
Even though the study of isoperimetric regions is a classical problem, the precise  isoperimetric profile is known for few manifolds. Examples include space forms ($\re^n$, $S^n$, $\mathbb{H}^n$, with the flat, round and hyperbolic metrics, respectively), cylinders  ($S^n\times \re$ with the product metric, see \cite{Pedro}) and low dimensional products of space forms with one dimensional circles ($S^n\times S^1$, $\re^n\times S^1$, $\mathbb{H}^n\times S^1$, $2\leq n\leq 7$, see \cite{PeRit}). See also some recent results on $\re \mathbb P^n$ and space lenses \cite{Viana}.  On the other hand,  seemingly  simple products like $S^2\times \re^2$ or $S^3\times \re^2$,  with the product metric, have resulted harder to understand than their factors and their explicit isoperimetric profiles are not known. Therefore, qualitative results in this direction, which include lower bounds for isoperimetric profiles of product manifolds, or characterizations of   isoperimetric regions for products,  have  been of interest, see for example, \cite{Morgan2}, \cite{Morgan3}, \cite{Pet}, \cite{PetRu2} and  \cite{RuizVaz}.


In this article, we  estimate explicit lower bounds for the isoperimetric profile of the product of a compact manifold and Euclidean space with the flat metric $g_E$, $(M\times \re^n,g+g_E)$, using techniques and ideas developed by F. Morgan \cite{Morgan}, A. Ros \cite{Ros} and M. Ritor\'e and E.  Vernadakis \cite{RiVer}. In particular, we introduce a lower bound for the isoperimetric profile of  $(M\times \re^n,g+g_E)$ for regions of large volume.

Let $(M^m,g)$ be a closed (compact, without boundary) Riemannian manifold. Let $D^n_R$ be a disk of radius $R>0$ in $\re^n$. If the volume of the region	$M^m\times D^n_R\subset M^m\times \re^n$ is $v$, its area can be computed to be  given by the function

\begin{equation}
	\label{Fmn}
F_{M,n}(v)=Vol(M)^{\frac{1}{n}}\gamma_n v^{\frac{n-1}{n}}
\end{equation}
where $\gamma_n$ is the n-Euclidean isoperimetric constant. 
	 We will denote by $C_{M,n}$, the  constant coefficient of $F_{M,n}(v)$, this is, $C_{M,n} =Vol(M)^{\frac{1}{n}}\gamma_n$.
\begin{Theorem}
	\label{Thm1}
 Let $m,n>1$. Let $(M^m,g)$ be a compact Riemannian manifold without boundary and $h_m(v)$ a concave lower bound for its isoperimetric profile.   	Let $\alpha \in \re$, $0<\alpha<1$.Then, for $v>v_{0}$, 
	
	\begin{equation}
		\label{alpha}
\alpha^{2-\frac{1}{n}} F_{M,n}(v) \leq	I_{(M^m\times \re^n,g+g_E)}(v),
	\end{equation}
	
\noindent	where  $v_0=\left(\frac{C_{M,n}}{k(1-\alpha)^2}\right)^n$ and  $k= \frac{\alpha Vol(M)}{h_m(\alpha Vol(M))}$.
\end{Theorem}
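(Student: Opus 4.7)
The plan is a slicing argument along the $\mathbb{R}^n$-factor combined with the isoperimetric inequalities on $M$ (via $h_m$) and on $\mathbb{R}^n$ (Euclidean, with constant $\gamma_n$). Given a closed region $\Omega\subset M\times\mathbb{R}^n$ of volume $v$, define the fiber-volume function $a:\mathbb{R}^n\to[0,Vol(M)]$ by $a(y)=Vol^m(\Omega^y)$, where $\Omega^y=\{x\in M:(x,y)\in\Omega\}$; thus $\int_{\mathbb{R}^n}a(y)\,dy=v$. The ``bulk'' set $E=\{y:a(y)\geq\alpha Vol(M)\}$ will play the role of the projection of the model region $M\times D^n_R$, and $s=\int_{E^c}a(y)\,dy$ measures how much of $\Omega$'s mass sits in lean fibers.

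Decompose $Vol^{m+n-1}(\partial\Omega)$ into a horizontal part (boundary normal tangent to $M$) and a lateral part (normal tangent to $\mathbb{R}^n$). By Fubini, the horizontal part is at least $\int_{\mathbb{R}^n}P_M(\Omega^y)\,dy\geq\int h_m(a(y))\,dy$. On $E^c$, concavity of $h_m$ with $h_m(0)=0$ forces $h_m(t)/t\geq 1/k$ for $t\leq\alpha Vol(M)$, with $k=\alpha Vol(M)/h_m(\alpha Vol(M))$; integrating yields a horizontal contribution of at least $s/k$ over $E^c$. The lateral part, via a Fleming--Rishel coarea-type argument applied to $a$ at the level $\alpha Vol(M)$, dominates $\alpha Vol(M)\cdot Vol^{n-1}(\partial E)$. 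The Euclidean isoperimetric inequality gives $Vol^{n-1}(\partial E)\geq\gamma_n|E|^{(n-1)/n}$; combined with $|E|\geq(v-s)/Vol(M)$ (since $a\leq Vol(M)$ on $E$) this produces a lateral lower bound of $\alpha\,C_{M,n}(v-s)^{(n-1)/n}$.

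Adding the two orthogonal contributions yields
$$Vol^{m+n-1}(\partial\Omega)\;\geq\;\alpha\,C_{M,n}(v-s)^{(n-1)/n}+\frac{s}{k},$$
valid for any $s\in[0,v]$. A case split on $s$ (roughly, $s\leq(1-\alpha)v$ versus $s>(1-\alpha)v$), combined with the hypothesis $v>v_0=(C_{M,n}/(k(1-\alpha)^2))^n$, shows that the right-hand side always exceeds $\alpha^{2-1/n}F_{M,n}(v)$: in the first regime $(v-s)\geq\alpha v$ and the lateral term alone gives exactly $\alpha^{2-1/n}F_{M,n}(v)$, while in the second regime the horizontal term $s/k$ becomes the dominant contribution, and the specific form of $v_0$ is precisely what is required to close the resulting inequality. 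Taking the infimum over $\Omega$ then yields the theorem.

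The main obstacle is justifying the lateral-contribution estimate $\alpha Vol(M)\cdot Vol^{n-1}(\partial E)$. This is a BV/coarea estimate on the fiber-volume function $a$, relying on an orthogonal decomposition of the perimeter measure of $\partial\Omega$ into horizontal and lateral parts so that the horizontal bound over $E^c$ and the lateral bound across the $\alpha Vol(M)$-level set can be added without double-counting. Getting this step right, and matching the specific form of the threshold $v_0$ in the case analysis, is the delicate technical core of the proof.
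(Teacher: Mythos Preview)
Your route is genuinely different from the paper's. The paper does not slice $\Omega$ directly along $\mathbb{R}^n$; it first applies the Ros--Morgan product symmetrization, replacing $\Omega$ by a region $E$ in the two--dimensional model strip $(0,Vol(M))\times(0,\infty)$ whose boundary $\delta E$ is a single monotone curve and whose perimeter is measured by $ds^2=h_m^2(y)\,dx^2+f_n^2(x)\,dy^2$. The dichotomy is geometric: either $\delta E$ meets the vertical line $x=\alpha v/Vol(M)$ at height $y\geq\alpha\,Vol(M)$, in which case the term $\int f_n(x)\,dy$ over $\delta E\cap\{x\geq\alpha v/Vol(M)\}$ already gives the bound; or it does not, which forces a rectangle of area $(1-\alpha)^2 v$ into $E^c\cap\{x< v/Vol(M)\}$, and the paper then derives a \emph{contradiction} from the upper bound $I_{M\times\mathbb{R}^n}\leq F_{M,n}$ when $v>v_0$. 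Your argument avoids the 2D reduction and the contradiction step entirely, bounding both cases from below directly; what you gain is a more elementary framework, what you lose is the monotone--curve structure that makes the paper's rectangle/area bookkeeping clean.

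Two points to tighten. First, the displayed additive inequality $Per(\partial\Omega)\geq \alpha\,C_{M,n}(v-s)^{(n-1)/n}+s/k$ is not justified: the horizontal and lateral normal components are related Pythagoreanly, so one only has $Per\geq\max$ of the two contributions, not their sum. Your case split in fact uses only one term at a time, so this is harmless, but it should be stated that way. Second, your Case~2 (``$s>(1-\alpha)v$, use $s/k$'') produces the threshold $v>\bigl(k\,\alpha^{2-1/n}C_{M,n}/(1-\alpha)\bigr)^n$, which is not the paper's $v_0$: the $(1-\alpha)^2$ in the paper's $v_0$ comes specifically from the rectangle--area argument, a mechanism your slicing does not reproduce. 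So your assertion that ``the specific form of $v_0$ is precisely what is required to close the resulting inequality'' is not substantiated; you do get \emph{a} threshold of the right shape, but not the one stated.
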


Note that, since the regions $M^m\times D^n_R$ are actual closed regions in  $M^m\times \re^n$, then
$$\alpha^{2} F_{M,n}(v) \leq	I_{(M^m\times \re^n,g+g_E)}(v)\leq  F_{M,n}(v).$$
This is, the closer to 1 one chooses $\alpha$ to be, the closer the bound is to the isoperimetric profile  (although $v_0$ will get bigger). It was actually proven by Vernadakis and Ritor\'e  in \cite{RiVer}, see also the work of  Gonzalo in \cite{Gonz}, that eventually,  for some   $v^*>0$, $ F_{M,n}(v) =	I_{(M^m\times \re^n,g+g_E)}(v)$ holds for $v>v^*$. Nevertheless,  said $v^*$ has not been quantified or bounded for general manifolds $M^m$ (see \cite{RuizVaz} for  work of the authors in  this direction, for the particular case of the product of a flat torus and Euclidean space, $T^m\times \re^n$, for small dimensions $m+n$).

By studying the isoperimetric profile of manifolds of the type $(S^m\times \re^n,g^m_0+g_E)$,  we make the following estimates for low dimensions. 

\begin{Theorem}
	\label{Iso}
The following bounds hold.

	\begin{equation}
		\label{s2xr2}
		I_{(S^2\times \re^2,g_0^2+g_E)}(v)\geq 0.88 I_{(S^4, 4.7 g_0^4)}(v)
	\end{equation}

	\begin{equation}
			\label{s2xr3}
	I_{(S^2\times \re^3,g_0^2+g_E)}(v)\geq 0.86I_{(S^5, 7.5 g_0^5)}(v)
\end{equation}
	 
		\begin{equation}
				\label{s3xr2}
		I_{(S^3\times \re^2,g_0^3+g_E)}(v)\geq 0.91  I_{(S^5, 4.9 g_0^5)}(v)
	\end{equation}
	
	\noindent Where $g_0^m$ denotes the round metric of the $m-$sphere and $g_E$ the flat metric of $\re^n$
\end{Theorem}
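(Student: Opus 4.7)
The plan is to combine Theorem \ref{Thm1}, which is sharp for large volumes, with a complementary lower bound valid on small and intermediate volumes, and then to calibrate the parameters of the comparison sphere $(S^{m+n},r^2 g_0^{m+n})$ so that its scaled isoperimetric profile lies beneath the resulting piecewise bound on the whole range $(0,r^{m+n}\mathrm{Vol}(S^{m+n}))$.

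First I would apply Theorem \ref{Thm1} in each case. The leading constants $0.88$, $0.86$, $0.91$ are tuned to match the exponent $\alpha^{2-1/n}$: namely $\alpha^{3/2}=0.88$ for $(m,n)=(2,2)$, $\alpha^{5/3}=0.86$ for $(2,3)$, and $\alpha^{3/2}=0.91$ for $(3,2)$. Feeding in the explicit concave lower bound $h_m(v)$ for the profile of $(S^m,g_0^m)$ supplied by L\'evy--Gromov, Theorem \ref{Thm1} yields an explicit threshold $v_0$ together with the inequality $I_{(S^m\times\re^n,g_0^m+g_E)}(v)\ge\alpha^{2-1/n}F_{S^m,n}(v)$ for every $v>v_0$.

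For $v\le v_0$ I would instead use a Ros-type symmetrization to restrict to rotationally symmetric competitors in the $\re^n$ factor and then compare against the appropriate model. Since $S^m\times\re^n$ has nonnegative Ricci curvature and the $S^m$ factor has positive Ricci, one obtains at very small $v$ the essentially Euclidean estimate $I(v)\gtrsim\gamma_{m+n}v^{(m+n-1)/(m+n)}$, which already dominates the scaled sphere profile $c\,I_{(S^{m+n},r^2 g_0^{m+n})}(v)$ since $c<1$; the middle regime is handled by direct comparison against the explicit profile of tubular regions $S^m\times D^n_R$ and ball-type regions whose boundaries can be written down in closed form.

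The radius $r$ of the model sphere is then calibrated so that the maximum $r^{m+n-1}\mathrm{Vol}(S^{m+n-1})$ of the scaled profile just fits under $\alpha^{2-1/n}F_{S^m,n}(V/2)$, with $V=r^{m+n}\mathrm{Vol}(S^{m+n})$. Because the concave sphere profile peaks at $v=V/2$ while the lower bound coming from Theorem \ref{Thm1} keeps increasing past this point, the only binding comparison is near the peak, and the choices $r^2=4.7,\,7.5,\,4.9$ are selected precisely to make this inequality tight in each of the three cases. The main obstacle is the intermediate regime $v\le v_0$, where neither Theorem \ref{Thm1} nor the Euclidean asymptotic bound is directly sharp; closing the gap without sacrificing more than the stated constants requires either a careful curvature-based symmetrization or an effective upper bound on $v_0$ paired with an explicit analysis of ball-type competitors, after which the whole argument reduces to a one-variable numerical comparison between two explicit functions of $v$.
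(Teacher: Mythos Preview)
Your plan has a genuine gap in the ``middle regime'' $v\le v_0$, and it stems from a misconception about what is actually computable. You propose to handle intermediate volumes by ``direct comparison against the explicit profile of tubular regions $S^m\times D^n_R$ and ball-type regions whose boundaries can be written down in closed form.'' But knowing the area of specific competitors only gives \emph{upper} bounds for $I_{S^m\times\re^n}$, whereas what is needed is a \emph{lower} bound. The entire difficulty of the theorem is precisely that the isoperimetric profiles of $S^2\times\re^2$, $S^2\times\re^3$, $S^3\times\re^2$ are not known, so no direct numerical check on them is possible. Moreover, the threshold $v_0$ produced by Theorem~\ref{Thm1} with $\alpha^{2-1/n}$ equal to the stated constants is extremely large (on the order of $10^5$ for $S^2\times\re^2$), so the uncontrolled interval is far from negligible.

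The paper circumvents this obstacle by a device you do not mention: it invokes earlier comparison results (Lemma~2.1 of \cite{PetRu1} and Corollary~3.2, Lemma~4.1 of \cite{PetRu2}) which bound $I_{S^m\times\re^n}$ from below by a constant times $I_{S^{m+n-1}\times\re}$, i.e.\ a product with a \emph{one}-dimensional Euclidean factor. The latter profile is explicitly known by Pedrosa's work, so the intermediate regime becomes a genuine one-variable numerical comparison between two explicit functions. For the large-volume tail the paper does not use Theorem~\ref{Thm1} at all; it instead proves a scaling lemma (Lemma~\ref{x0y0}) showing that any single pointwise bound $I_{S^m\times\re^n}(x_0)\ge y_0$ propagates forward as $(y_0/x_0^{(n-1)/n})v^{(n-1)/n}$, which gives a much smaller transition volume than Theorem~\ref{Thm1}. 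A companion lemma (Lemma~\ref{smalls}, based on Bayle's concavity of the renormalized profile) propagates a pointwise bound backward to handle small $v$. Your outline would become viable only after inserting this reduction to $S^{m+n-1}\times\re$; without it there is no explicit lower bound to compare against in the middle range.
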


Estimating explicitly the isoperimetric profile of a Riemannian manifold has many applications  on differential geometry and geometric analysis.  For example, in some recent work of  Andreucci and  Tedeev \cite{Andre}, understanding the shape of the  isoperimetric profile on  some manifolds helps on the study of Sobolev and Hardy inequalities (see also \cite{Andre2}). As another example, Theorem 1.1  in \cite{PetRu2} relates lower bounds on the   isoperimetric profile of product manifolds, with lower  bounds on their Yamabe constant.

For a closed   Riemannian manifold $(M^n,g)$, $n>2$, the solution of the Yamabe problem (cf. in \cite{Parker}) gives a metric $\tilde g$   for $M$ of constant scalar curvature and unit volume in the conformal class of $g$. These metrics are critical points of the Hilbert-Einstein functional restricted to conformal classes. The minima of the restricted functional are always realized as proved by the combined efforts of H. Yamabe \cite{Yamabe}, T. Aubin \cite{Aubin}, R. Schoen \cite{RSchoen}, and N. S. Trudinger \cite{Trudinger}, giving the solution of the Yamabe problem. These metrics are called Yamabe metrics and have constant scalar curvature.  The Yamabe constant is defined as the infimum of the total scalar curvature, restricted to the conformal class of $g$:
\begin{equation}
	(Y,[g])= \inf_{g\in [g]}\frac{\int_M s_g dvol_g}{Vol(M,g)^{\frac{n-2}{2}}}.
\end{equation}

\noindent Where $s_g$ denotes the scalar curvature of $g$, $[g]$ the conformal class of $g$ and $dvol_g$ the volume element of $g$. 
In general, to compute the Yamabe constant of a manifold is very difficult, as not all metrics of constant scalar curvature minimize the total scalar functional in a given conformal class, so it is not simple to asses when a metric of constant scalar curvature is a Yamabe metric. Therefore,   estimates of the Yamabe constant have been of interest. By  Theorem 1.1  in \cite{PetRu2},  we obtain the following as a corollary of Theorem \ref{Iso}.

\begin{Corollary}
	\label{Yamabe}
 
	The following holds.
	
		\begin{equation}
		Y(S^2\times \re^2,[g_0^2+g_E])\geq 0.78 Y(S^4)
	\end{equation}
	
		\begin{equation}
		Y(S^2\times \re^3,[g_0^2+g_E])\geq 0.75  Y(S^5)
	\end{equation}
	
		\begin{equation}
		Y(S^3\times \re^2,[g_0^3+g_E])\geq 0.83 Y(S^5)
	\end{equation}

\end{Corollary}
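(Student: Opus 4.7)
The proof proposal is a direct application of Theorem~1.1 of \cite{PetRu2} to each of the three isoperimetric comparisons provided by Theorem \ref{Iso}. The plan is as follows. First, I would recall the mechanism of Theorem~1.1 of \cite{PetRu2}: given a lower bound of the form $I_{(N,g)}(v) \geq c\, I_{(S^{d},\lambda g_{0}^{d})}(v)$ (where $d$ is the dimension of the Riemannian manifold $N$) valid on the appropriate range of volumes, it produces a corresponding lower bound for the Yamabe constant of the form $Y(N,[g]) \geq \psi(c)\, Y(S^{d})$, with $\psi$ an explicit function. A crucial point is the conformal invariance of the Yamabe constant, which gives $Y(S^{d},[\lambda g_{0}^{d}]) = Y(S^{d})$; thus the particular radii $\lambda = 4.7,\, 7.5,\, 4.9$ appearing in Theorem \ref{Iso} do not enter the final Yamabe inequalities.

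The next step is to apply this translation case by case. For $(S^{2}\times\re^{2}, g_{0}^{2}+g_{E})$, which has dimension $4$, inequality (\ref{s2xr2}) supplies $c = 0.88$, and substitution into $\psi$ yields $Y(S^{2}\times\re^{2},[g_{0}^{2}+g_{E}]) \geq 0.78\, Y(S^{4})$. For $(S^{2}\times\re^{3}, g_{0}^{2}+g_{E})$ and $(S^{3}\times\re^{2}, g_{0}^{3}+g_{E})$, both of dimension $5$, inequalities (\ref{s2xr3}) and (\ref{s3xr2}) give $c = 0.86$ and $c = 0.91$ respectively, producing the bounds $0.75\, Y(S^{5})$ and $0.83\, Y(S^{5})$. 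Aside from the numerical substitution, no additional geometric input is needed.

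The main obstacle I anticipate is verifying that the hypotheses of Theorem~1.1 of \cite{PetRu2} are actually met by the non-compact product manifolds considered here. That theorem may impose concavity of the comparison function, or demand the isoperimetric comparison to hold on the full range $(0,\infty)$ of volumes. The bounds from Theorem \ref{Iso} should be examined against these assumptions; if any gap arises, particularly for the small-volume regime where Theorem \ref{Thm1} does not supply information, it should be closed using the universal small-volume asymptotics of the isoperimetric profile together with the fact that $Y(S^{d})$ governs the asymptotic Sobolev inequality on any smooth manifold of dimension $d$. Once the hypotheses are fully justified, the three numerical estimates follow by direct substitution, and the corollary is complete.
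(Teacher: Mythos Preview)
Your overall strategy---feed the isoperimetric comparisons of Theorem~\ref{Iso} into Theorem~1.1 of \cite{PetRu2}---is exactly what the paper does. However, your description of that theorem contains a genuine error that would derail the computation.

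You assert that by conformal invariance the scaling factors $\mu=4.7,\,7.5,\,4.9$ on the comparison spheres ``do not enter the final Yamabe inequalities.'' This is false. The conclusion of Theorem~1.1 in \cite{PetRu2} reads
\[
Y(M^m\times\re^n,[g+g_E])\;\geq\;\min\left\{\frac{\mu\, m(m-1)}{(m+n)(m+n-1)},\ \lambda^{2}\right\}Y(S^{m+n}),
\]
so the scale $\mu$ appears explicitly alongside $\lambda^{2}$, and in two of the three cases here it is essentially the binding term. For instance, for $S^{2}\times\re^{3}$ one has $\frac{7.5\cdot 2\cdot 1}{5\cdot 4}=0.75$ while $\lambda^{2}\approx 0.867^{2}\approx 0.752$, so the bound $0.75\,Y(S^{5})$ comes from the $\mu$-term. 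Your function $\psi(c)$ cannot depend on $c=\lambda$ alone; the specific radii in Theorem~\ref{Iso} were chosen precisely to balance the two terms of the minimum. Conformal invariance of $Y(S^{d})$ is irrelevant here because the theorem is not comparing Yamabe constants of rescaled spheres; it is using the scale $\mu$ to control the scalar-curvature term in the Yamabe quotient on the product.

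On the hypotheses: the comparison in Theorem~\ref{Iso} already holds for all $v>0$, so your small-volume worry is moot. What you do need to check (and what the paper notes) is that $I_{(S^m\times\re^n,g_0^m+g_E)}$ is non-decreasing, which follows from Lemma~\ref{x0y0}, and that the round $S^m$ satisfies $s_g\geq m(m-1)$, which holds with equality.
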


The Yamabe invariant  is defined as the supremum of all the Yamabe constants of a manifold and it was introduced by Kobayashi \cite{Kobayashi} and Schoen \cite{RSchoen2},

$$Y(M)=\sup_{\{[g]\}} Y(M,[g]).$$
It is an invariant of the smooth structure of $M$. Computing this invariant has also proven to be difficult and many efforts  have been made, see for example \cite{Aku}, \cite{ADH1}, \cite{ADH2} and \cite{Gursky}. See also \cite{LeBrun} for a detailed discussion of the Yamabe invariant on dimension 4.

Due to Theorem 1.1 in \cite{AFP}, the Yamabe constants of the  product of a manifold with positive scalar curvature and Euclidean space are general bounds to the Yamabe invariant of some product manifolds. We thus also get the following  as a Corollary of Theorem \ref{Iso}.

\begin{Corollary}
	\label{Yamabe2}
	Let $M^2$ be a 2-dimensional manifold and $N^3$ a 3-dimensional manifold.
	Then
	
		\begin{equation}
		Y(S^2\times M^2)\geq 0.785 Y(S^4)
	\end{equation}

	\begin{equation}
	Y(S^2\times N^3)\geq 0.75 Y(S^5)
	\end{equation}
	
		\begin{equation}
		Y(S^3\times M^2)\geq 0.83 Y(S^5)
	\end{equation}

\end{Corollary}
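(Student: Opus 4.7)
The plan is to reduce the corollary to Corollary \ref{Yamabe} by means of the result of Akutagawa--Florit--Petean cited in the text as Theorem 1.1 of \cite{AFP}. That theorem, applied with the compact factor $(S^m,g_0^m)$ having positive scalar curvature and the other factor being an arbitrary closed manifold $X^k$, should yield an inequality of the form
\[
Y(S^m\times X^k)\;\geq\; Y\bigl(S^m\times \re^k,[g_0^m+g_E]\bigr).
\]
Here $m=2$, $k=2$ (for the first bound), $m=2$, $k=3$ (for the second), and $m=3$, $k=2$ (for the third). In each case the right-hand side is precisely the quantity already bounded below in Corollary \ref{Yamabe}.

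First I would verify, by inspecting the statement of Theorem 1.1 in \cite{AFP}, that the inequality applies in the exact form above without any extra dimensional correction factor (a normalization subtlety to watch for, since the Yamabe constant is sensitive to the total dimension of the product and to the scaling of the Euclidean factor). Once this is established, the three cases fall out by direct concatenation: chaining
\[
Y(S^m\times X^k)\;\geq\; Y\bigl(S^m\times \re^k,[g_0^m+g_E]\bigr)\;\geq\; c_{m,k}\,Y(S^{m+k})
\]
with the explicit constants $c_{2,2}=0.78$, $c_{2,3}=0.75$, $c_{3,2}=0.83$ from Corollary \ref{Yamabe} gives the three displayed inequalities (the first coefficient being written as $0.785$ in the statement; I would double-check this against the actual numerical content of \eqref{s2xr2} since Corollary \ref{Yamabe} records $0.78$, and adjust whichever is less sharp).

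The main potential obstacle is purely bibliographic: matching the hypotheses and conclusion of \cite{AFP}'s Theorem 1.1 to the situation here. In particular, one must check that the theorem is stated for a closed manifold of positive scalar curvature as one factor (satisfied by $S^m$, $m\geq 2$) and for an arbitrary closed manifold as the other factor, and that its conclusion compares the Yamabe invariant of the compact product with the Yamabe constant of the noncompact product equipped with the flat-Euclidean factor. Once that alignment is confirmed, no further analytic or geometric work is needed: the corollary is a clean concatenation of two previously established estimates.
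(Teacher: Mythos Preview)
Your approach is exactly the one the paper takes: Corollary~\ref{Yamabe2} is deduced from Corollary~\ref{Yamabe} via Theorem~1.1 of \cite{AFP}, and the paper spells out the chain you anticipated as
\[
Y(M\times N)\;\geq\;\sup_{r>0} Y(M\times N,[g+rh])\;\geq\;\lim_{r\to\infty}Y(M\times N,[g+rh])\;=\;Y(M\times \re^n,[g+g_E]).
\]
Your caution about the $0.78$ versus $0.785$ discrepancy is well placed; the paper itself carries this inconsistency (the proofs of Lemmas~\ref{22a}--\ref{22b} use $\lambda=0.886$, whence $\lambda^2\approx 0.785$, while Theorem~\ref{Iso} and Corollary~\ref{Yamabe} round to $0.88$ and $0.78$).
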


This result improves previously known bounds for the Yamabe invariant of some product manifolds of dimension 4. For example,   smaller bounds were shown  in \cite{PetRu1} and \cite{Pet2}, namely  that 
$	Y(S^2\times M^2)\geq 0.68 Y(S^4)$
and
 $	Y(S^2\times M^2)\geq 0.0006 Y(S^4),$
  respectively. On dimension 5, it was previously known (Theorem 1.5 in \cite{PetRu2}) that $	Y(S^2\times N^3)\geq 0.63Y(S^5)$ and $		Y(S^3\times M^2)\geq 0.75 Y(S^5)$. 
  
  The lower bounds in Corollary \ref{Yamabe} are also involved in a formula for surgery by B. Amman, M. Dahl and E. Humbert in \cite{ADH1}:
  
  \begin{Theorem}\textnormal{(Corollary 1.4 in \cite{ADH1})}
  	Let $M^n$ be a compact manifold. If $N$ is obtained from $M$ by a  $k$-dimensional
 surgery, $k \leq n - 3$, then
  $$Y(N)\geq \min\{Y(M), \Lambda_{n,k}\} $$
  where $ \Lambda_{n,k}>0$ is a dimensional constant that depends only on $n$ and $k$.
\end{Theorem}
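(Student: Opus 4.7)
The plan is to prove this surgery estimate by a constructive argument: starting from any conformal class on $M$ with Yamabe constant arbitrarily close to $Y(M)$, I would build a family of explicit metrics on $N$ whose Yamabe constants are bounded below by $\min\{Y(M),\Lambda_{n,k}\}$ up to a vanishing error, and then pass to the supremum on both sides.

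First I would fix $\varepsilon>0$ and pick a conformal class $[g]$ on $M$ with $Y(M,[g])\geq Y(M)-\varepsilon$; by solving the Yamabe problem in $[g]$, select a unit-volume Yamabe representative, still denoted $g$. Near the embedded sphere $S^{k}\hookrightarrow M$ along which the surgery is performed, I would deform $g$ by a Gromov--Lawson-type construction, replacing a tubular neighborhood $S^{k}\times D^{n-k}$ of $S^{k}$ by a neck modeled on a warped product whose cross-section is a round sphere $S^{n-k-1}_{\delta}$ of small radius $\delta$. The codimension hypothesis $n-k\geq 3$ enters here: it is what makes the surgery diffeomorphism-compatible and what keeps the relevant conformal quantities controllable as $\delta\to 0$.

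The heart of the proof is then a uniform Yamabe-type inequality for the resulting metric $\tilde g_{\delta}$ on $N$:
\begin{equation*}
\frac{\int_{N}\bigl(a_{n}|\nabla u|^{2}+s_{\tilde g_{\delta}}u^{2}\bigr)\,dv_{\tilde g_{\delta}}}{\|u\|_{L^{2n/(n-2)}}^{2}}\ \geq\ \min\bigl\{Y(M,[g]),\,\Lambda_{n,k}\bigr\}-o_{\delta}(1),
\end{equation*}
with $a_{n}=4(n-1)/(n-2)$, valid for every smooth positive $u$ on $N$. The strategy is to split $u$ via a cutoff adapted to the neck into a piece $u_{\mathrm{far}}$ supported away from the surgery, where conformal comparison with $(M,g)$ bounds the contribution below by $Y(M,[g])$, and a piece $u_{\mathrm{near}}$ concentrated in the neck, where a blow-up/rescaling argument forces it to compete with the Yamabe quotient of a limiting model space; the infimum of those limiting quotients is precisely what defines $\Lambda_{n,k}$. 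A concentration-compactness dichotomy handles the cross terms.

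The main obstacle is producing this inequality with an explicitly small error $o_{\delta}(1)$. What is needed is a capacity-type estimate ruling out that $u$ concentrates on the shrinking $k$-dimensional surgery locus without paying a large Dirichlet-energy cost; this is precisely where $n-k\geq 3$ enters analytically, through the vanishing $(k+1)$-capacity of a point-like set in codimension at least three. Once the uniform bound is in place, taking the supremum over conformal classes of $N$ and then sending $\delta\to 0$ and $\varepsilon\to 0$ yields $Y(N)\geq\min\{Y(M),\Lambda_{n,k}\}$.
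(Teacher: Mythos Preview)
The paper does not prove this theorem at all: it is quoted verbatim as Corollary~1.4 of Ammann--Dahl--Humbert \cite{ADH1} and used as a black box to derive numerical consequences from the isoperimetric estimates. There is therefore no ``paper's own proof'' to compare your proposal against.

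That said, your sketch does track the genuine architecture of the Ammann--Dahl--Humbert argument: a Gromov--Lawson-type neck construction, a cutoff splitting of test functions into a far part controlled by $Y(M,[g])$ and a near part governed by a model geometry, and the codimension condition $n-k\geq 3$ entering both topologically and through a capacity-type estimate. Two points are worth sharpening if you intend this as more than an outline. First, the model space that produces $\Lambda_{n,k}$ is specifically $\mathbb{H}^{k+1}\times S^{n-k-1}$ (and its limits), not an arbitrary warped product; identifying this limit precisely is what makes $\Lambda_{n,k}$ a well-defined dimensional constant. Second, the ``concentration-compactness dichotomy handles the cross terms'' line hides the most delicate part of \cite{ADH1}: one must rule out intermediate concentration scales and control the interaction of the two pieces uniformly in $\delta$, which in the original paper occupies the bulk of the analysis. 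Your sketch is a faithful roadmap, but the actual proof is substantially more technical than the proposal suggests.
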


Following the   estimations in \cite{ADH2}, Theorem \ref{Iso} and Corollary \ref{Yamabe} improve on previously known lower bounds for some of the $ \Lambda_{n,k}$, namely, we now have,
$\Lambda_{4,1}\geq 0.71 Y(S^4)\geq 43.9 $, $\Lambda_{5,1}\geq 0.718 Y(S^5) \geq 56.7$, and $\Lambda_{5,2}\geq0.62Y(S^5) \geq49$. As another application, recall that the range of the Yamabe invariant for simply connected manifolds of dimension 5 is limited, by a result of B. Ammann, M. Dahl and E. Humbert in \cite{ADH2}: 

 \begin{Theorem} \textnormal{(Corollary 5.3 in \cite{ADH2})}
Let $M$ be a compact simply connected manifold of dimension 5, then: $0.57  \ Y(S^5)\leq Y(M)\leq Y(S^5) $.
\end{Theorem}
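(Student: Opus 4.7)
The upper bound $Y(M)\le Y(S^5)$ is the classical Aubin inequality, which holds on any compact Riemannian $n$-manifold. I would establish it by taking, in an arbitrary conformal class $[g]$ on $M$, test functions that concentrate near a chosen point and locally agree with the Aubin extremizers of the standard sphere: evaluating the Yamabe functional on these bubbles and letting the concentration scale tend to zero recovers $Y(S^5)$ as a limit, so $Y(M,[g])\le Y(S^5)$, and taking the supremum over $[g]$ finishes the upper bound.

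For the lower bound, the plan is to couple the Ammann--Dahl--Humbert surgery formula (the theorem from \cite{ADH1} recalled just above the statement) with the Smale--Barden classification of simply connected compact $5$-manifolds. By Smale's handle cancellation, any such $M$ admits a handle decomposition with no $1$-handles; reversing the Morse function also eliminates the $4$-handles. Hence $M$ is obtained from $S^5$ by a finite sequence of surgeries of dimensions $k=1$ and $k=2$ (attachment of $2$- and $3$-handles). Both values satisfy $k\le n-3=2$, so the ADH formula applies at every step.

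Iterating $Y(M_{i+1})\ge \min\{Y(M_i),\Lambda_{5,k}\}$ along such a sequence yields
\[
Y(M)\ \ge\ \min\bigl\{\,Y(S^5),\,\Lambda_{5,1},\,\Lambda_{5,2}\,\bigr\}.
\]
Plugging in the estimates for $\Lambda_{5,1}$ and $\Lambda_{5,2}$ available to \cite{ADH2} at the time produces the stated constant $0.57\,Y(S^5)$; with the refined values $\Lambda_{5,1}\ge 0.718\,Y(S^5)$ and $\Lambda_{5,2}\ge 0.62\,Y(S^5)$ obtained as consequences of Theorem \ref{Iso} and Corollary \ref{Yamabe}, the same argument in fact improves the minimum to $0.62\,Y(S^5)$.

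The hard part is the topological input rather than the analysis: one must genuinely verify that every simply connected compact $5$-manifold on Barden's list, including the torsion cases such as the Wu manifold $SU(3)/SO(3)$, can be reached from $S^5$ by a finite chain of surgeries of dimension at most $2$, so that no codimension-$2$ configuration outside the range of the ADH formula ever appears. Once that handle-theoretic reduction is secured, the remaining argument is simply the iterated application of the surgery inequality with the quantitative bounds on $\Lambda_{5,k}$ supplied by the preceding sections.
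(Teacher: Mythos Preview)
The paper does not give a proof of this statement: it is quoted verbatim as Corollary~5.3 of \cite{ADH2}, and the only thing the present paper adds is the remark that its new estimates on $\Lambda_{5,1}$ and $\Lambda_{5,2}$ push the lower bound from $0.57\,Y(S^5)$ up to $0.62\,Y(S^5)$. So there is nothing in the paper to compare your argument against; what you have written is, in outline, the argument of \cite{ADH2} itself, and your closing paragraph about the improvement to $0.62\,Y(S^5)$ matches exactly what the paper asserts after the quoted theorem.

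One small point of presentation: the handle-theoretic reduction you describe is really about a \emph{cobordism} from $S^5$ to $M$, not about a handle decomposition of $M$ alone. Since $\Omega_5^{SO}=0$, every closed oriented $5$-manifold bounds, so there is an oriented cobordism $W^6$ from $S^5$ to $M$; simple connectivity of both ends lets one trade away the $1$- and $5$-handles of $W$, leaving only $2$- and $3$-handles, which correspond to surgeries of dimension $1$ and $2$ on the successive level sets. Your conclusion (only $k=1,2$ surgeries are needed, both in the range $k\le n-3$) is correct, but phrasing it via the cobordism rather than via a Morse function on $M$ itself makes the link to the surgery inequality cleaner and avoids the appearance of conflating handles of $M$ with surgeries producing $M$.
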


Following the   estimations in \cite{ADH2}, the bounds in Corollary \ref{Yamabe} can be used to improve the last inequality to: $0.62 \   Y(S^5)\leq Y(M)\leq Y(S^5)$.

\section{Notation and background}

Existence and regularity of isoperimetric regions is a fundamental result due to the works of Almgren \cite{Almgren}, Gr\"uter \cite{Gruter}, Gonzalez, Massari, Tamanini \cite{Gonzalez}, (see also Morgan \cite{MorganBook}, Ros \cite{Ros}).

\begin{Theorem}
	\label{existence}
	Let $M^n$ be a compact Riemannian manifold, or non-compact with $M/G$ compact, being $G$ the
	isometry group of $M$. Then, for any $t$, $0 < t < V(M)$, there exists a
	compact region $\Omega \subset M$, whose boundary $\Sigma = \partial \Omega$ minimizes area among regions of
	volume $t$. Moreover, except for a closed singular set of Hausdorff dimension at most
	$n - 8$, the boundary $\Sigma$ of any minimizing region is a smooth embedded hypersurface
	with constant mean curvature.
	
\end{Theorem}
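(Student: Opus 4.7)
The plan is to apply the direct method of the calculus of variations in the space of Caccioppoli (finite perimeter) sets, and then invoke classical interior regularity theory for perimeter-minimizers to obtain the geometric conclusions. Fix $t$ with $0<t<Vol^n(M)$ and pick a minimizing sequence $\{\Omega_k\}$ of Caccioppoli sets with $Vol^n(\Omega_k)=t$ and $Vol^{n-1}(\partial^*\Omega_k)\to I_{(M,g)}(t)$, where $\partial^*$ denotes the reduced boundary.

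In the compact case I would argue as follows. Cover $M$ by finitely many coordinate charts so that one can use the local BV compactness theorem: a sequence of characteristic functions with uniformly bounded perimeter has a subsequence converging in $L^1(M)$ to the characteristic function of some set $\Omega$. Lower semicontinuity of perimeter in $L^1$ gives $Vol^{n-1}(\partial^*\Omega)\leq \liminf Vol^{n-1}(\partial^*\Omega_k) = I_{(M,g)}(t)$, and $L^1$ convergence preserves volume, so $\Omega$ realizes the infimum.

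For the non-compact case with $M/G$ compact, the subtle point is that a minimizing sequence may lose mass to infinity. I would fix a relatively compact fundamental domain $D\subset M$ for the $G$-action and use the translates $\{gD\}_{g\in G}$ to decompose each $\Omega_k$. By acting with a suitable $g_k\in G$ on $\Omega_k$ (which leaves both volume and perimeter unchanged), I can arrange that a definite fraction of the perimeter of $\Omega_k$ meets a fixed enlargement of $D$. A concentration-compactness argument then rules out splitting: if a positive fraction of volume escaped to infinity, it would form a separate "piece at infinity" that could be replaced by a small round ball with strictly less boundary area once $t$ is fixed, contradicting minimality in the limit. The remaining piece then has a convergent subsequence in $L^1_{\mathrm{loc}}$, and lower semicontinuity again gives a minimizer. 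The hardest step here is ruling out loss of mass and guaranteeing a single, non-splitting limit; the $G$-cocompactness is what makes a clean reduction to the compact case possible.

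Finally, given a minimizer $\Omega$, the conclusions on its boundary follow from the interior regularity theory for area-minimizers with a volume constraint (De Giorgi, Federer, Almgren, and for the volume-constrained setting Gonzalez--Massari--Tamanini and Gr\"uter): the reduced boundary $\partial^*\Omega$ is a smooth embedded hypersurface away from a closed singular set of Hausdorff dimension at most $n-8$, and the first variation of area subject to the volume constraint produces a Lagrange multiplier which is precisely (up to sign) the mean curvature of $\Sigma$ on the regular part, forcing it to be constant. The main obstacle is the non-compact part; the regularity part is quoted from the classical literature rather than reproved.
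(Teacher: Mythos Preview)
The paper does not give its own proof of this theorem: it is stated in the ``Notation and background'' section as a classical existence-and-regularity result, attributed to Almgren, Gr\"uter, and Gonzalez--Massari--Tamanini (with Morgan and Ros cited as further references), and is quoted without proof. So there is no in-paper argument to compare yours against.

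Your sketch is a faithful outline of the standard proof found in those references: direct method via BV compactness and lower semicontinuity of perimeter in the compact case, a concentration-compactness step exploiting $G$-cocompactness in the non-compact case, and then the Gonzalez--Massari--Tamanini/Gr\"uter regularity theory for the volume-constrained problem to get smoothness of $\Sigma$ off a singular set of dimension at most $n-8$ together with constancy of the mean curvature via the Lagrange multiplier. One point that would need more care if you were to write this up in full is the no-splitting step: the heuristic ``replace the piece at infinity by a small round ball with less area'' is not quite the right comparison (a ball of the \emph{same} volume need not have smaller perimeter than the escaping piece); the usual argument instead uses either strict subadditivity/concavity of the isoperimetric profile for small volumes, or brings the pieces back together via the $G$-action and shows the merged competitor has strictly less perimeter. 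With that caveat, your outline matches the literature the paper is citing.
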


We will denote by $V_m=Vol(S^m,g_0^m)$ the m-volume of the $m$-sphere with the round metric and radius one. We denote  by 
$\gamma_m$ the  $m$-dimensional Euclidean isoperimetric constant. Recall that

$$\gamma_m=\frac{V_{m-1}}{Vol(D^m_1)^{\frac{m-1}{m}}}$$
\noindent where $D^m_1$ is a disk of radius 1 on the m-Euclidean space with the flat metric.

Note that the isoperimetric profile of $\re^n$ is given by $I_{(\re^n, g_E)}(v)=\gamma_n v^{\frac{n-1}{n}}$. Through the article we will denote by $f_n(v)$ the isoperimetric profile of $\re^n$.

For any closed Riemannian manifold $(M^m,g)$ we have 
$\lim_{v\rightarrow 0} \frac{I_{(M^m,g)}(v)}{v^{\frac{m-1}{m}}}=\gamma_m$.

The isoperimetric profile of the spherical cylinder $(S^m\times \re,  g_0^m+dt^2)$, $m\geq2$, was studied by Pedrosa in \cite{Pedro}. Isoperimetric regions are either ball type regions or cylindrical sections $S^m \times (a,b)$ of volume $V_m(b-a)$ and area $2V_m$.

The explicit formulas for the ball type regions $\Omega_h$ (Proposition 4.1 in \cite{Pedro}), are

\begin{equation}
	\label{pedrosa1}
	Vol^m(\partial \Omega_h)=2V_{m-1}\int_0^\eta \frac{\sin^{m-1}(y)}{\sqrt{1-(u_{m-1}(\eta,y))^2}}dy,
\end{equation}

\begin{equation}
	\label{pedrosa2}
	Vol^{m+1}(  \Omega_h)=2V_{m-1}\int_0^\eta \frac{\left(\int_0^y\sin^{m-1}(s) ds\right) u_{m-1}(\eta,y) }{\sqrt{1-(u_{m-1}(\eta,y))^2}}dy,
\end{equation}

\noindent for $\eta \in (0,\pi)$. Where

$$u_{n-1}=\frac{(\sin(\eta))^{n-1}/ \int_0^{\eta}(\sin(s))^{n-1}ds}{(\sin(y))^{n-1}/ \int_0^{y}(\sin(s))^{n-1}ds}.$$

and 
$$h=h_{n-1}(\eta)=\frac{(\sin(\eta))^{n-1}}{ \int_0^{\eta}(\sin(s))^{n-1}ds}.$$

For the m-sphere with the round metric $(S^m,g_0^m)$, isoperimetric regions are metric balls. The explicit formulas for these are:

\begin{equation}
	\label{sphere2}
	Vol^{m-1}(\partial \Omega_r)=V_{m-1}  \sin^{m-1}(r),
\end{equation}

\begin{equation}
	\label{sphere1}
	Vol^m(  \Omega_r)=V_{m-1} \int_0^r \sin^{m-1}(u)du,
\end{equation}

\noindent for $r \in [0,\pi]$.
The isoperimetric profile of the $m$-sphere  is therefore  known precisely. In particular, it is symmetric, concave and reaches its maximum at $v=\frac{vol(S^m,g_0)}{2}$.

There is homogeneity for the isoperimetric profiles. For a given manifold $(M^n,g)$, $n\geq2$, for any $\mu>0$, 

$$I_{(M,\mu g)}(v)=\mu^{\frac{n-1}{2}} I_{(M, g)}(\mu^{\frac{-n}{2}}v).$$


For regions of the type	$M^m\times D^n_R\subset M^m\times \re^n$, of volume $v>0$, where   $D^n_R$ is a disk in $\re^n$ of radius $R>0$,  it can be  checked by direct computation  that the area of its boundary is given by the function $F_{M,n}(v)$ in equation   (\ref{Fmn}).

\section{Isoperimetric profiles of $M^m\times \re^n$}

\begin{proof}
	
	 ({\it of Theorem \ref{Thm1}})
	
	We first follow a construction by F. Morgan in \cite{Morgan}, which estimates lower bounds for isoperimetric profiles of products. 
	Consider the product manifold  $(0,Vol(M))\times (0,\infty)\subset \re^2$ 
	with a model metric in the sense of the Ros product Theorem (Theorem 3.7 in \cite{Ros}).   $(0,Vol(M))$  and $(0,\infty)$ will have  Euclidean Lebesgue Measure and Riemannian metric $\frac{1}{h(x)}dx$ and $\frac{1}{f_n(y)}dy$ respectively, where $h(x)$ is the concave lower bound of the isoperimetric profile of $(M,g)$ and $f_n(y)$ the isoperimetric profile of $\re^n$. 
	
	This should be  a model metric in order for the Ros product Theorem to work. It suffices to prove that in each interval, $(0, Vol(M))$ and $(0, \infty)$, intervals of the type $(0,t)$, $t>0$, minimize perimeter, among closed sets $S$ of given Euclidean length $t$. For the interval $(0, \infty)$ this is true since $f_n(x)$ is nondecreasing. For the interval $(0,Vol(M))$, we argue as follows. Let $S\subset(0,Vol(M))$ be a closed set of perimeter $t$,  and suppose $S$ is not of the type $(0,t)$; then it must be a locally finite collection of closed intervals and an interior interval must be at least borderline unstable,  because   $h(v)$ is   concave. Hence $S$ does not minimize perimeter.  

	Minkowski content on the intervals $(0,Vol(M))$  and $(0,\infty)$ counts boundary points of intervals with density $h(y)$ and $f_n(x)$, respectively. Minkowski content on the stripe
	$(0,Vol(M))\times (0,\infty)$ has perimeter measured by 
	
	\begin{equation}
		\label{Morg}
		ds^2=h^2(y) dx^2+ f_n(x)^2 dy^2.
	\end{equation}

	It follows from the proof of the Ros Product Theorem that, for any $v>0$,  $I_{(M\times  \re^n)}(v)$ is bounded from below by the perimeter $P(E)$ of the boundary $\delta E$ of some region $E \subset (0,Vol(M))\times (0,\infty)$. The area of $E$, $A(E)$, satisfies $v= A(E)$, and $\delta E$ is a connected boundary curve along which $y$ is nonincreasing and $x$ is nondecreasing. The enclosed region $E$ is on the lower left of $\delta E$.  Hence
	
	$$P(E)\leq I_{(M \times \re^n)}(v)$$
	
	where
	
	\begin{equation}
		\label{PE}
		P(E) = \int_{\delta E} \sqrt{ h^2(y) dx^2 + f_n(x)^2 dy^2}.
	\end{equation}
	and the area of the region $E$ is given by
	\begin{equation}
	\label{Area}
	A(E) = \int \int_E dx \, dy.
		\end{equation}
	
	Since each term in the square root of eq. (\ref{PE})  is non-negative, we have 
	
	\begin{equation}
		\label{PE1}
		P(E) \geq  \int_{\delta E} f_n(x)  dy
	\end{equation}
	
	and
	\begin{equation}
		\label{PE2}
		P(E) \geq  \int_{\delta E}   h(y)   dx. 
	\end{equation}

 Let $v>0$, with corresponding region $E \subset (0,Vol(M))\times (0,\infty)$ and boundary $\delta E$. For $t$, $0<t\leq 1$, consider the stripe

$$W_{t}= \left\{(x,y) | x \geq t\frac{v}{ Vol(M)} \right\}\subset  (0,\infty) \times (0, Vol(M)). $$
Naturally, the area of $W_{t}$ is infinite but for its complement we have $A(W_{t}^c)=t v$.


Fix $\alpha$, with $0<\alpha< 1$  as in the hypothesis. Suppose that for some $(x,y)$ in the boundary curve $\delta E$, such that $x = \alpha \frac{v}{Vol(M)}$, we have
  $y\geq \alpha Vol(M)$.

Note that in this case, 

$$I_{M\times \re^n}(v)\geq P(E) \geq   \int_{ \delta E} \sqrt{f_n^2(x) dy^2+h_m^2(y)dx^2} \geq  \int_{ \delta E \cap W_{\alpha}} \sqrt{f_n^2(x) dy^2+h_m^2(y)dx^2}  $$
$$ \geq f_n\left(\alpha \frac{v}{ Vol(M)}\right)\alpha  Vol(M)=(\gamma_n) \left(\alpha \frac{v}{ Vol(M)}\right)^{\frac{n-1}{n}} \alpha Vol(M)$$
$$=  (\gamma_n Vol(M)^{\frac{1}{n}})\alpha^{2-\frac{1}{n}} v^{\frac{n-1}{n}} = \alpha^{2-\frac{1}{n}} C_{M,n} v^{\frac{n-1}{n}}=\alpha^{2-\frac{1}{n}} F_{M,n}(v)$$

	\noindent as stated in the Theorem.
	
Suppose that otherwise,  for all $(x,y)$ in the boundary curve $\delta E$, such that $x= \alpha \frac{v}{Vol(M)}$, we have
 $y< \alpha Vol(M)$. We   show next that this cannot be the case for $v$ big enough.
	
	
	Since $y$ is non increasing, then, for all $(x,y) \in E$, such that  $x\geq \alpha \frac{v}{Vol(M)}$, we have $y< \alpha Vol(M)$. 
	This implies that for  the rectangle
	
	$$R=\{(x,y)|\alpha \frac{v}{Vol(M)}\leq x<  \frac{v}{Vol(M)} \  \ \textnormal{and} \   \  \alpha Vol(M) \leq y <   Vol(M)\},$$
\noindent we have  $R\subset E^c$ and  $R\subset W_1^c$. Also, the Area of $R$ can be computed to be,	$A(R)= \left( (1-\alpha) \frac{v}{Vol(M)}\right) \left( (1-\alpha) Vol(M) \right)=(1-\alpha)^2 v$.

Now, since  


 $$v=A(E)=A(W_1\cap E) + A(W_1^c \cap E),$$
 and
 
  $$v=A(W_1^c)=A(W_1^c\cap E) +A(W_1^c \cap E^c),$$
\noindent then, by letting  $\tilde v=A(E\cap W_1)$, we have,  $A(W_1^c \cap E^c)=v-(v-\tilde v)=\tilde v.$
  
  Together with $R\subset (W_1^c \cap E^c )$, this implies $A(R)= (1-\alpha)^2 v\leq \tilde v $, that is
\begin{equation}
	\label{propleft}
	\frac{\tilde v}{v}\geq (1-\alpha)^2.
\end{equation}

	On the other hand, by hypothesis, $h_m$ is a concave function with $h_m(0)=h_m(Vol(M))=0$. Hence, there is some    $k>0$, such that 
	$h_m(\alpha Vol(M))=k \alpha Vol(M)$, and $h_m(y)\geq k y$ for $y\leq  \alpha Vol(M)$. This yields 
	

$$I_{M\times \re^n}(v)\geq P(E) \geq  \int_{ \delta E} \sqrt{f_n^2(x) dy^2+h_m^2(y)dx^2}\geq     \int_{ \delta E \cap W_1} \sqrt{f_n^2(x) dy^2+h_m^2(y)dx^2}$$
$$\geq \int_{\delta E \cap W_1}  h_m(y)dx\geq k \int_{\delta E \cap W_1}  y dx \geq k \tilde v$$

\noindent because $\tilde v=A(E\cap W_1)=\int_{\delta E \cap W_1}  y dx$. Thus, since we always have $F_{M,n}(v)\geq I_{(M \times \re^n)}(v)$, we get,
$$C_{M,n} v^{\frac{n-1}{n}}=F_{M,n}(v)\geq I_{(M \times \re^n)}(v)\geq k \tilde v$$
That is

\begin{equation}
	\label{propright}
	\frac{C_{M,n}}{k} \frac{1}{v^{\frac{1}{n}}}\geq \frac{\tilde v}{v}.
\end{equation}

Finally, from (\ref{propleft}) and  (\ref{propright}):$$ 	\frac{C_{M,n}}{k} \frac{1}{v^{\frac{1}{n}}}\geq\frac{\tilde v}{v}\geq (1-\alpha)^2,$$

which cannot occur for $v\geq v_0=\left(\frac{C_{M,n}}{k(1-\alpha)^2}\right)^n$.

\end{proof}

The following lemma will let us extend a known lower bound for  $I_{S^m\times \re^n}(v)$.

\begin{Lemma}
	\label{x0y0}
Suppose that for some  $x_0,y_0>0$, we have $y_0\leq I_{S^m\times \re^n}(x_0)$. Then, for all $v>x_0$:
	
	$$ \left(\frac{y_0}{x_0^{\frac{n-1}{n}}}\right) v^{\frac{n-1}{n}} \leq  I_{S^m\times \re^n}(v). $$
\end{Lemma}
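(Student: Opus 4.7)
The plan is to exploit a homothetic scaling of the Euclidean factor to transport the hypothesis at the single volume $x_{0}$ to an inequality at any volume $v>x_{0}$. For $\lambda\in(0,1)$ define the diffeomorphism $\Phi_{\lambda}\colon S^{m}\times\re^{n}\to S^{m}\times\re^{n}$ by $\Phi_{\lambda}(p,x)=(p,\lambda x)$. For any region $\Omega\subset S^{m}\times\re^{n}$ with $\Vol(\Omega)=v>x_{0}$ the image $\Omega_{\lambda}=\Phi_{\lambda}(\Omega)$ has volume $\lambda^{n}v$, so the choice $\lambda=(x_{0}/v)^{1/n}\in(0,1)$ gives $\Vol(\Omega_{\lambda})=x_{0}$. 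The hypothesis $y_{0}\leq I_{S^{m}\times\re^{n}}(x_{0})$ then yields
\[
\Vol^{m+n-1}(\partial\Omega_{\lambda})\ \geq\ I_{S^{m}\times\re^{n}}(x_{0})\ \geq\ y_{0}.
\]

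The heart of the argument is the pointwise Jacobian estimate $\Vol^{m+n-1}(\partial\Omega_{\lambda})\leq\lambda^{n-1}\,\Vol^{m+n-1}(\partial\Omega)$. To prove it, at any smooth point $(p,x)\in\partial\Omega$ I would decompose the unit outer normal as $\nu=\nu^{S}+\nu^{E}$ with $\nu^{S}\in T_{p}S^{m}$ and $\nu^{E}\in T_{x}\re^{n}$, and write $\cos\theta=\|\nu^{S}\|$, $\sin\theta=\|\nu^{E}\|$. A convenient orthonormal basis of $T_{(p,x)}\partial\Omega$ then consists of $m-1$ vectors lying in $T_{p}S^{m}$, $n-1$ vectors lying in $T_{x}\re^{n}$, and a single mixed vector $w=\sin\theta\,e-\cos\theta\,f$, with $e\in T_{p}S^{m}$ and $f\in T_{x}\re^{n}$ unit. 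Since $d\Phi_{\lambda}$ fixes the $T_{p}S^{m}$-components and multiplies $T_{x}\re^{n}$-components by $\lambda$, the Gram matrix of the pushed-forward basis is diagonal with entries $1$ (appearing $m-1$ times), $\sin^{2}\theta+\lambda^{2}\cos^{2}\theta$, and $\lambda^{2}$ (appearing $n-1$ times). Its determinant is $(\sin^{2}\theta+\lambda^{2}\cos^{2}\theta)\lambda^{2(n-1)}\leq\lambda^{2(n-1)}$ for $\lambda\leq 1$, so the area form is multiplied by at most $\lambda^{n-1}$ pointwise, and integrating over $\partial\Omega$ gives the desired inequality.

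Combining the two ingredients produces $y_{0}\leq\lambda^{n-1}\Vol^{m+n-1}(\partial\Omega)$, equivalently
\[
\Vol^{m+n-1}(\partial\Omega)\ \geq\ \frac{y_{0}}{x_{0}^{(n-1)/n}}\,v^{(n-1)/n},
\]
and taking the infimum over regions $\Omega$ of volume $v$ finishes the proof. The main obstacle is the Jacobian estimate of the middle paragraph; the sharp factor $\lambda^{n-1}$ (rather than anything worse like $\lambda^{m+n-1}$) comes entirely from the codimension-one condition, which forces the projection of $T\partial\Omega$ onto $T_{p}S^{m}$ to have at least $m-1$ unit eigenvalues, the remaining eigenvalue being $\sin^{2}\theta$. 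That exact eigenvalue pattern is what matches the exponent $(n-1)/n$ in the conclusion, so the hypersurface hypothesis is used in an essential way; the same scheme carried out on lower-dimensional submanifolds would produce a strictly worse exponent and would not give the statement of the lemma.
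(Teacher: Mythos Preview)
Your proof is correct, and the core idea—contract the Euclidean factor by a factor $\lambda=(x_0/v)^{1/n}<1$ to push volume $v$ down to $x_0$, then bound the area loss by $\lambda^{n-1}$—is exactly the paper's idea. The implementation, however, is different and in fact cleaner. The paper first invokes the Ros product theorem to symmetrize an isoperimetric region $\Omega$ with respect to both factors, so that every slice $\Omega\cap(\{p\}\times\re^n)$ is an $n$-disk; it then passes to the two–dimensional model strip $(0,\Vol(S^m))\times(0,\infty)$ with the weighted perimeter $\sqrt{f_n^2(x)\,dy^2+h_m^2(y)\,dx^2}$, where the scaling becomes $x\mapsto rx$ and the inequality $P(\delta E_r)\le r^{(n-1)/n}P(\delta E)$ drops out of the homogeneity of $f_n$. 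Your route bypasses all of this: you apply the map $\Phi_\lambda$ directly to an arbitrary competitor and read off the pointwise Jacobian $\lambda^{n-1}\sqrt{\sin^2\theta+\lambda^2\cos^2\theta}\le\lambda^{n-1}$ on the hypersurface. What the paper's detour through symmetrization buys is consistency with the machinery already built for Theorem~\ref{Thm1}; what your argument buys is that no symmetrization, no model metric, and no appeal to the Ros theorem are needed at all—the lemma becomes an immediate consequence of the anisotropic scaling and a one-line Gram determinant. Both reach the same inequality with the same constants.
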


\begin{proof}
	Note that both $(S^m,g_0)$ and $(\re^n,g_E)$ are model metrics in the sense of the Ros product Theorem \cite{Ros}. This implies that   isoperimetric regions can be symmetrized with respect to both factors of the product $S^m\times \re^n$. This is, each slice $\Omega \cap ( \{p\} \times \re^n)$, $p\in S^m$ can be replaced by an $n-$disk in $\re^n$ of some radius $R_p\geq0$, $D_{R_p}$, such that $Vol^{n}(\Omega \cap ( \{p\} \times \re^n))=Vol^{n}(D_{R_p})$, and each slice $\Omega \cap ( S^m \times \{q\} )$, $q\in \re^n$ can be replaced by a (geodesic) $m-$ball in $S^m$ of some radius $t_q\geq0$, $B_{t_q}$, such that $Vol^{m}(\Omega \cap ( S^m \times \{q\} ))=Vol^{m}(B_{t_q})$. 
	
	As  in the proof of Theorem \ref{Thm1}, consider the product manifold $(0,Vol(M)) \times (0,\infty) \subset \re^2$
	where 	 $(0,Vol(M))$  and $(0,\infty)$  have  Euclidean Lebesgue Measure and Riemannian metric $\frac{1}{h(y)}dy$ and $\frac{1}{f_n(x)}dx$ respectively, where	$f_n(x)=\gamma_n x^{\frac{n-1}{n}}$ is the isoperimetric profile of $\re^n$ and $h_m(x)$ the isoperimetric profile of $S^m$.
	
Let $\Omega$ be an isoperimetric, symmetrized (with respect to both  $(S^m,g_0)$ and $(\re^n,g_E)$), region of volume $v$. As argued in the proof of Theorem \ref{Thm1},  there is a corresponding region $E\subset (0,Vol(S^m))\times (0, \infty)$ such that 
	
	$$v=Vol^{m+n}(\Omega)=A(E)=\int \int_E dy dx.$$
	\noindent And, moreover, since both $(S^m,g_0)$ and  $(\re^n,g_E)$ are model metrics, and $\Omega$ is symmetrized with respect to both $S^m$ and $\re^n$, we actually have equality for the perimeter of the boundary:
	
	$$Vol^{m+n-1}(\partial \Omega)= P(\delta E) =\int_{\delta E} \sqrt{f_n^2(x) dy^2+h_m^2(y) dx^2}$$
	\noindent

Now,  for fixed $r$,  $0<r\leq1$, we can construct a new region $\Omega_r$, from $\Omega$, in the following way.	Since each slice $\Omega \cap ( \{p\} \times \re^n)$, $p\in S^m$ is an $n-$disk of some radius $R_p$, we may replace each of these $n-$disks with a new $n-$disk of radius $r^{\frac{1}{n}}R_p$, for each $p \in S^m$. Of course, if the slice $\Omega \cap ( \{p\} \times \re^n)$ is empty we leave it that way.

	Let $$E_r=\{(rx,y)\in (0,Vol(S^m))\times (0, \infty)|(x,y)\in E\}.$$
	
Consequently,	the volume of $\Omega_r$ is 
	$$Vol^{m+n}(\Omega_r)=A(E_r)=\int \int_E dy (r dx) = r \int \int_E dy dx =r Vol^{m+n}(\Omega),$$

		\noindent and the $m+n-1$ volume  of its boundary $\partial \Omega_r$, is 
	$$Vol^{m+n-1}(\partial \Omega_r)= P(\delta E_r) =\int_{\delta E_r} \sqrt{f_n^2(rx) dy^2+h_m^2(y) r^2dx^2}$$

	We then have,
$$	Vol^{m+n-1}(\partial \Omega_r)= \int \sqrt{r^{\frac{2(n-1)}{n}}f_n^2(x) dy^2+r^2 h_m^2(y)dx^2}$$
$$= r^{\frac{n-1}{n}}\int \sqrt{f_n^2(x) dy^2+r^{\frac{2}{n}} h_m^2(y)dx^2}\leq  r^{\frac{n-1}{n}} \int \sqrt{f_n^2(x) dy^2+ h_m^2(y)dx^2},$$ 
\noindent since $r^{\frac{2}{n}}\leq1$.	This implies $	Vol^{m+n-1}(\partial\Omega_r) \leq 	r^{\frac{n-1}{n}} 	Vol^{m+n-1}(\partial\Omega).$

	Now, suppose as in the hypothesis that $y_0\leq I_{M^m\times \re^n}(x_0)$. Let $v>x_0$. Let $\Omega$ be an isoperimetric, symmetrized, region of $M^m\times \re^n$ of volume $v$.
	
	Let $r=\frac{x_0}{v}<1$. We construct $\Omega_r$ as before,  so that
	
	$$Vol^{m+n}(\Omega_r)=r v=x_0$$
	and 
	$$Vol^{m+n-1}(\partial \Omega_r)\leq r^{\frac{n-1}{n}}Vol^{m+n-1}(\partial \Omega)=r^{\frac{n-1}{n}}I_{M^m\times \re^n}(v)$$
	since $\Omega$ is isoperimetric of volume $v$. This yields
	$$y_0\leq I_{M^m\times \re^n}(x_0)\leq Vol^{m+n-1}(\partial \Omega_r)\leq r^{\frac{n-1}{n}}I_{M^m\times \re^n}(v).$$
	
 By recalling that $r=\frac{x_0}{v}$, we conclude the  lemma.
	
\end{proof}

We will use the following observation.

 \begin{Lemma}
	\label{Fbeta}
If a non-negative function $J(v)=I^{\frac{n}{n-1}}(v)$,  $n>1$, with $J(0)=0$, is concave ($ J''(v)\leq 0$), then the function $\frac{I(v)}{v^{\frac{n-1}{n}}}$ is non increasing.

\end{Lemma}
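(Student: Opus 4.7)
The plan is to reduce the claim about $I(v)/v^{(n-1)/n}$ to the simpler statement that $J(v)/v$ is non-increasing, and then deduce the latter from the standard fact that for a concave function vanishing at the origin, secant slopes from the origin are non-increasing.

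First, observe that since $I(v)=J(v)^{(n-1)/n}$, one has
\begin{equation*}
\frac{I(v)}{v^{(n-1)/n}} \;=\; \left(\frac{J(v)}{v}\right)^{(n-1)/n}.
\end{equation*}
Because $n>1$ gives $(n-1)/n>0$, the function $x\mapsto x^{(n-1)/n}$ is strictly increasing on $[0,\infty)$. Hence $I(v)/v^{(n-1)/n}$ is non-increasing if and only if $J(v)/v$ is non-increasing, so it suffices to prove the latter.

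For the key step, take $0<v_1<v_2$ and write $v_1=\tfrac{v_1}{v_2}\,v_2+\bigl(1-\tfrac{v_1}{v_2}\bigr)\cdot 0$. Concavity of $J$ together with $J(0)=0$ yields
\begin{equation*}
J(v_1)\;\geq\;\tfrac{v_1}{v_2}J(v_2)+\bigl(1-\tfrac{v_1}{v_2}\bigr)J(0)\;=\;\tfrac{v_1}{v_2}J(v_2),
\end{equation*}
so $J(v_1)/v_1\geq J(v_2)/v_2$, which is exactly monotonicity of the secant slope from the origin. (Equivalently, if $J$ is $C^1$, concavity gives $J'$ non-increasing, so $J(v)=\int_0^v J'(s)\,ds\geq v\,J'(v)$; then $(J(v)/v)'=(vJ'(v)-J(v))/v^2\leq 0$.) Combined with the first paragraph, this proves the lemma.

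There is no real obstacle here: the only subtlety is that $J$ is assumed concave but not necessarily differentiable, which is why the secant-slope argument above is preferable to the calculus argument, and why the hypothesis $J(0)=0$ is essential to turn concavity into the desired monotonicity.
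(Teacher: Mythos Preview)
Your proof is correct and follows essentially the same route as the paper: both reduce to showing that $J(v)/v$ is non-increasing and then take the $(n-1)/n$-th power. Your parenthetical calculus argument is exactly the paper's proof (the paper sets $K(v)=J'(v)v-J(v)$, notes $K'(v)=J''(v)v\le 0$ and $K(0)=0$, hence $(J(v)/v)'\le 0$); your main secant-slope argument is a slightly more elementary variant that has the advantage of not requiring $J$ to be differentiable, which is a nice touch given that the statement only assumes concavity in the sense $J''\le 0$ and one might want the conclusion under weaker regularity.
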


\begin{proof}
	Note that $K(v)=J'(v)v-J(v)$ is decreasing for $v\geq0$, since $K'(v)=J''(v)v\leq0$, for $v\geq0$.  Moreover, $K(0)=0$, by hypothesis on $J(v)$.  	This implies that $K(v)\leq0$ and thus $\frac{d}{dv}\left(\frac{J(v)}{v}\right)=\frac{J'(v)v-J(v)}{v^2}=\frac{K(v)}{v^2}\leq 0$. Of course, being $\frac{J(v)}{v}$ non increasing implies that $\left(\frac{J(v)}{v}\right)^{\frac{n-1}{n}}=\frac{I(v)}{v^{\frac{n-1}{n}}}$ is non increasing.

\end{proof}

\begin{Lemma}
	\label{smalls}
	Let $(M^m,g)$, $(N^m,h)$ be two complete Riemannian manifolds, $m>1$, of non-negative Ricci curvature.  Let $v_0, k>0$, such that 
	
	$$ I_{(M,g)}(v_0)>k v_0^{\frac{m-1}{m}}.$$
	
	Then,  for $0 \leq v\leq v_0$:

	$$  \frac{k}{\gamma_m} I_{(N,h)}(v) < I_{(M,g)}(v),$$

\end{Lemma}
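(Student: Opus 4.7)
The plan is to reduce the claim to two monotonicity facts for the normalized profile $I(v)/v^{(m-1)/m}$, each of which follows from the standard concavity principle for manifolds with non-negative Ricci curvature together with Lemma \ref{Fbeta}.

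First I would invoke the known result that, for a complete Riemannian manifold of non-negative Ricci curvature and dimension $m$, the function $J(v)=I(v)^{m/(m-1)}$ is concave on its domain and $J(0)=0$ (due to Bayle and Rosales; see the references to the concavity of the normalized profile cited in \cite{Ros}). Applying Lemma \ref{Fbeta} to both $(M,g)$ and $(N,h)$, I obtain that
\[
\Phi_M(v) := \frac{I_{(M,g)}(v)}{v^{(m-1)/m}}, \qquad \Phi_N(v) := \frac{I_{(N,h)}(v)}{v^{(m-1)/m}}
\]
are non-increasing in $v>0$.

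Next I would use the universal small-volume asymptotics recalled in the paper,
\[
\lim_{v\to 0^{+}} \Phi_M(v) = \lim_{v\to 0^{+}} \Phi_N(v) = \gamma_m.
\]
Together with the non-increasing character of $\Phi_N$, this gives $\Phi_N(v)\le \gamma_m$ for every $v>0$, i.e.
\[
I_{(N,h)}(v) \;\le\; \gamma_m \, v^{(m-1)/m}.
\]
On the $M$-side, the hypothesis $I_{(M,g)}(v_0)>k\,v_0^{(m-1)/m}$ reads $\Phi_M(v_0)>k$, and since $\Phi_M$ is non-increasing, $\Phi_M(v)\ge \Phi_M(v_0)>k$ for all $0<v\le v_0$; hence
\[
I_{(M,g)}(v) \;>\; k\, v^{(m-1)/m} \qquad (0<v\le v_0).
\]

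Combining these two inequalities yields, for $0<v\le v_0$,
\[
\frac{k}{\gamma_m}\, I_{(N,h)}(v) \;\le\; k\, v^{(m-1)/m} \;<\; I_{(M,g)}(v),
\]
which is the desired statement (the $v=0$ case being trivial). The main obstacle is ensuring that the concavity of $I^{m/(m-1)}$ under non-negative Ricci applies in the possibly non-compact setting assumed here; this is covered by the complete-manifold versions in the cited literature, so no new argument is needed beyond invoking it and then chaining the two monotonicity estimates.
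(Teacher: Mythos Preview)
Your proof is correct and follows essentially the same route as the paper: invoke Bayle's concavity of $I^{m/(m-1)}$ under non-negative Ricci (compact and non-compact versions), apply Lemma~\ref{Fbeta} to get monotonicity of $I(v)/v^{(m-1)/m}$ for both manifolds, then combine $\Phi_N(v)\le\gamma_m$ with $\Phi_M(v)>k$ on $(0,v_0]$ exactly as you do. The only cosmetic difference is that the paper cites Bayle's thesis explicitly (Theorem~2.2.1 and Corollary~2.2.8) for the compact and complete cases, whereas you reference the result more informally.
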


\begin{proof}
	Being the manifolds of the hypothesis of non-negative Ricci curvature, by a result of V. Bayle (see  \cite{Bayle}, Theorem 2.2.1 for the compact and Corollary 2.2.8 for the non-compact version), the renormalized isoperimetric profiles $J_{(M,g)}(v)=I^{\frac{m}{m-1}}_{(M,g)}(v)$ and $J_{(N,h)}(v)=I^{\frac{m}{m-1}}_{(N,h)}(v)$	are concave. 	Thus, by the preceding Lemma, the functions $\frac{I_{(M,g)}(v)}{v^{\frac{m-1}{m}}}$ and $ \frac{I_{(N,h)}(v)}{v^{\frac{m-1}{m}}}$ are non increasing.	This implies,  for $v\geq 0$:
	\begin{equation}
		\label{gamma1}
		\gamma_m =	\lim_{v\rightarrow 0} \frac{I_{(N,h)}(v)}{v^{\frac{m-1}{m}}} \geq \frac{I_{(N,h)}(v)}{v^{\frac{m-1}{m}}}
	\end{equation} 
	Also   (using the hypothesis) we have, for $v\leq v_0$:  
	\begin{equation}
		\label{gamma2}
		\frac{I_{(M,g)}(v)}{v^{\frac{m-1}{m}}}	\geq \frac{I_{(M,g)}(v_0)}{v_0^{\frac{m-1}{m}}} >k
	\end{equation}

	Then, by combining (\ref{gamma1}) and  (\ref{gamma2}),   for $0\leq v\leq v_0$:
	
	\begin{equation}
		\frac{1}{\gamma_m}I_{(N,h)}(v)\leq	 v^{\frac{m-1}{m}} <	\frac{1}{k}	 I_{(M,g)}(v), 
	\end{equation}
	
	\noindent which implies the Lemma.
\end{proof}

Hence,  a punctual bound for the isoperimetric profile of $S^m\times \re^n$  can be extended onwards and backwards.

\begin{Corollary}
	Suppose there are some $x_0,y_0\geq 0$, such that $I_{(S^m\times \re^n,g_0^m+g_E)}(x_0)>y_0$. Then, for $v\leq x_0$:
	
	$$\left(\frac{y_0}{\gamma_{m+n} \ x_0^{\frac{m+n-1}{m+n}}}\right) I_{(S^{m+n},g_0)}(v) \leq I_{S^m\times \re^n}(v) $$
	
	while for $v\geq x_0$:
	
		$$\left(\frac{y_0}{ x_0^{\frac{n-1}{n}}}\right)v^{\frac{n}{n-1}} \leq I_{S^m\times \re^n}(v) $$
		 
\end{Corollary}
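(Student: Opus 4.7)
The corollary follows by applying Lemma \ref{x0y0} and Lemma \ref{smalls} in turn to the product $(S^m\times\re^n, g_0^m+g_E)$, handling the ranges $v\geq x_0$ and $v\leq x_0$ respectively. My plan is to dispatch each range by invoking the appropriate lemma and matching the constants.

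For $v\geq x_0$, the hypothesis $y_0 < I_{(S^m\times\re^n,g_0^m+g_E)}(x_0)$ in particular gives $y_0 \leq I_{(S^m\times\re^n,g_0^m+g_E)}(x_0)$, so Lemma \ref{x0y0} applies verbatim and yields a lower bound of the form $\bigl(y_0/x_0^{(n-1)/n}\bigr)\,v^{(n-1)/n}\leq I_{S^m\times\re^n}(v)$, which is the second displayed inequality. For $v\leq x_0$, I would invoke Lemma \ref{smalls} with $(M,g)=(S^m\times\re^n, g_0^m+g_E)$ and $(N,h)=(S^{m+n},g_0^{m+n})$, both of dimension $m+n$. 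Both comparison manifolds have non-negative Ricci curvature: the product metric has positive semi-definite Ricci tensor, being a Riemannian product of a positively curved sphere and flat Euclidean space, and $S^{m+n}$ is Einstein with positive Ricci curvature. Setting $k := y_0/x_0^{(m+n-1)/(m+n)}$, the standing hypothesis rewrites as $I_{(M,g)}(x_0) > k\, x_0^{(m+n-1)/(m+n)}$, which is exactly the input required by Lemma \ref{smalls}. Its conclusion $\frac{k}{\gamma_{m+n}}\,I_{(N,h)}(v) < I_{(M,g)}(v)$ then reads, after substituting the value of $k$, as the first displayed inequality.

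There is no serious technical obstacle: the proof reduces to correctly matching constants and verifying the curvature hypothesis of Lemma \ref{smalls}. The only point worth stressing is the non-negative Ricci condition on both $S^m\times\re^n$ and $S^{m+n}$, which is what allows Bayle's concavity result for the renormalized profile $I^{(m+n)/(m+n-1)}$ to be applied on each side of the comparison; both conditions are immediate from the product and round structures, so the corollary follows.
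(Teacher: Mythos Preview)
Your proof is correct and follows essentially the same approach as the paper: the authors also derive the first inequality from Lemma \ref{smalls} with $v_0=x_0$ and $k=y_0/x_0^{(m+n-1)/(m+n)}$, and the second directly from Lemma \ref{x0y0}. Your explicit verification of the non-negative Ricci hypothesis for both $S^m\times\re^n$ and $S^{m+n}$, and your observation that the exponent in the second inequality should be $v^{(n-1)/n}$ (as dictated by Lemma \ref{x0y0}) rather than $v^{n/(n-1)}$, are welcome additions that the paper leaves implicit.
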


\begin{proof}
	The first inequality  follows from lemma \ref{smalls}, by letting $v_0=x_0$, and $k=\frac{y_0}{x_0^{\frac{m+n-1}{m+n}}}$. The second one, directly from lemma \ref{x0y0}.
\end{proof}

\begin{figure}
		 	\includegraphics[scale=.2]{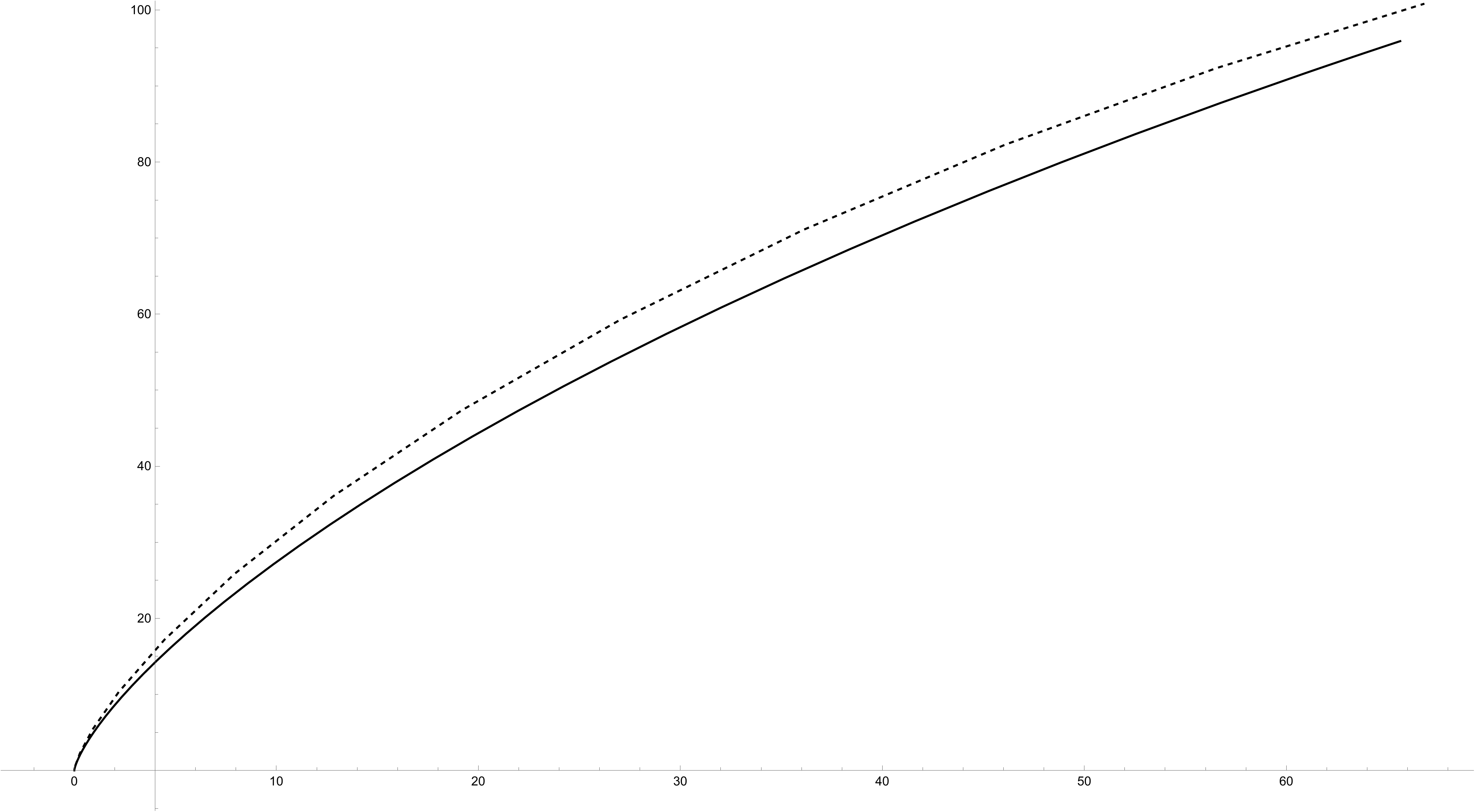}
	\caption{Comparison of   $ I_{(S^3\times \re, 2 (g_0^3+dt^2))}(v)$ (dashed) and  $0.886 I_{(S^5, 7.85 g_0^5)}(v)$ (continuous) for $4 \leq v\leq 65 $.}
	\label{S44to65}
\end{figure}

\begin{figure}
	\includegraphics[scale=.2]{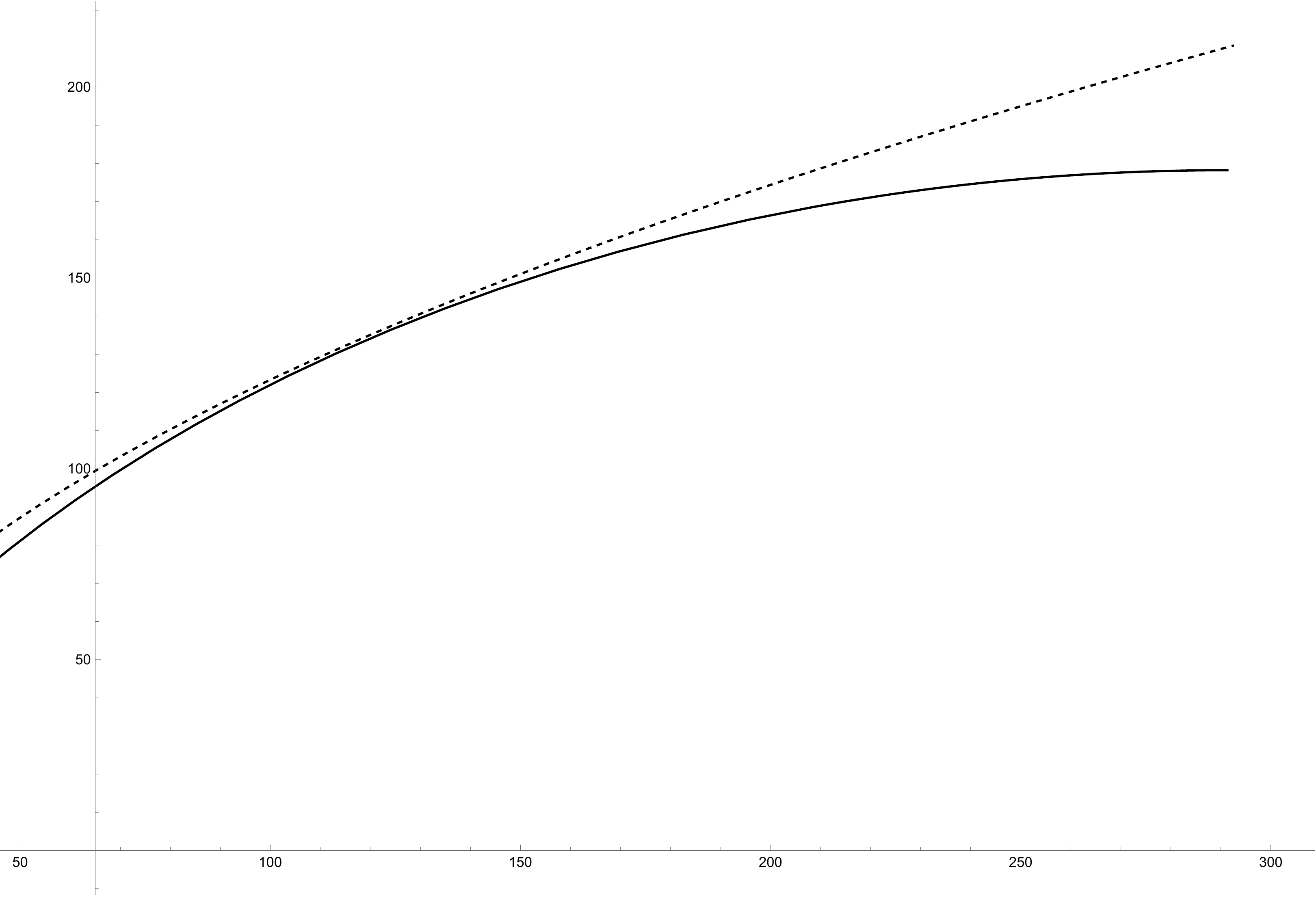}
	\caption{Comparison of   $12.32 v^{\frac{1}{2}}$ (dashed) and  $(0.886 )I_{(S^5, 7.85 g_0^5)}(v)$ (continuous) for $65\leq v\leq 291 $.}
		\label{S465to291}
\end{figure}

We will make use of the following result from  \cite{PetRu1}.

\begin{Lemma}\textnormal{(Lemma 2.1 of \cite{PetRu1})}
	\label{S2xR2}
	For all $v\geq0$
	
	$$ I_{(S^2\times \re^2,g_0^2+dt^2)}(v)\geq 	I_{(S^3\times \re, 2 (g_0^3+dt^2))}(v)$$
	
\end{Lemma}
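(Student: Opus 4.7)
I plan to adapt the symmetrization argument from the proof of Lemma~\ref{x0y0}. Given an isoperimetric region $\Omega\subset(S^2\times\re^2,\, g_0^2+g_E)$ of volume $v$, the Ros product theorem lets us assume $\Omega$ is $O(2)$-symmetric in the Euclidean factor, so that $\Omega=\{(p,x)\in S^2\times\re^2 : |x|_E\le R(p)\}$ for some $R:S^2\to[0,\infty)$. A direct computation in polar coordinates on $\re^2$ then gives
\[
v=\pi\int_{S^2}R^2\,dA_{g_0^2}, \qquad \mathrm{Vol}^3(\partial\Omega)=2\pi\int_{S^2}R\sqrt{1+|\nabla R|_{g_0^2}^2}\,dA_{g_0^2}.
\]

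The strategy is to exhibit, for each such $\Omega$, a comparison region $\tilde\Omega\subset(S^3\times\re,\, 2(g_0^3+dt^2))$ of the same volume and boundary area at most $\mathrm{Vol}^3(\partial\Omega)$; the lemma then follows from the definition of the isoperimetric profile. A natural candidate uses the Hopf fibration, which gives a Riemannian submersion $\pi_H:(S^3,2g_0^3)\to(S^2,\tfrac12 g_0^2)$ with circular fibers of length $2\sqrt 2\pi$: take $\tilde\Omega=\{(q,t)\in S^3\times\re : |t|\le h(\pi_H(q))\}$ for some $h:S^2\to[0,\infty)$. Matching volumes forces $h=R^2/4$, and a Fubini computation along the submersion yields $\mathrm{Vol}^3(\partial\tilde\Omega)=2\sqrt 2\pi\int_{S^2}\sqrt{1+R^2|\nabla R|^2}\,dA_{g_0^2}$, which one would like to bound above by $\mathrm{Vol}^3(\partial\Omega)$.

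The principal obstacle is that the pointwise inequality $\sqrt{2}\sqrt{1+R^2|\nabla R|^2}\le R\sqrt{1+|\nabla R|^2}$ required between the integrands fails both when $R<\sqrt{2}$ and when $|\nabla R|$ is large, so the Hopf construction alone does not settle the lemma. A case split by volume is therefore needed. For $v$ above the cylindrical transition for $(S^3\times\re,2(g_0^3+dt^2))$, Pedrosa's classification \cite{Pedro} shows the isoperimetric region there is a slab of constant boundary area $8\sqrt 2\pi^2$; since the cylinder $S^2\times D_R\subset S^2\times\re^2$ gives the upper bound $I_{(S^2\times\re^2)}(v)\le 4\pi\sqrt v$, one is reduced to the lower bound $I_{(S^2\times\re^2)}(v)\ge 8\sqrt 2\pi^2$ for $v\ge 8\pi^2$, which itself follows from Theorem~\ref{Thm1}. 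For small-to-moderate $v$ I would carry out the comparison directly using the explicit ball-type formulas (\ref{pedrosa1})--(\ref{pedrosa2}) for $(S^3\times\re,2(g_0^3+dt^2))$, and glue the two regimes continuously using the concavity of $I^{n/(n-1)}$ from Lemma~\ref{Fbeta} together with the common small-volume asymptotic $I\sim\gamma_4 v^{3/4}$ enjoyed by both profiles.
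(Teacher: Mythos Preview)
The present paper does not prove this lemma; it is quoted from \cite{PetRu1} and used as a black box, so there is no in-paper proof to match. Your proposal, however, has genuine gaps independent of that.

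The Hopf-fibration transplant is a natural idea, but, as you already observe, the pointwise inequality $\sqrt{2}\sqrt{1+R^2|\nabla R|^2}\le R\sqrt{1+|\nabla R|^2}$ fails both for small $R$ and for large $|\nabla R|$, so that construction does not yield the lemma. Your fallback case split does not close the argument either. In the large-volume regime you appeal to Theorem~\ref{Thm1}, but that theorem only gives $I_{(S^2\times\re^2)}(v)\ge\alpha^{3/2}\cdot 4\pi\sqrt v$ for $v>v_0(\alpha)$ with $\alpha<1$ and $v_0(\alpha)\to\infty$ as $\alpha\to 1$; it never delivers the sharp constant $4\pi\sqrt v$ you would need to meet $8\sqrt 2\,\pi^2$ at the slab transition, so a gap of unbounded-below width remains between the transition volume and whatever $v_0(\alpha)$ you pick. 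More seriously, in the ``small-to-moderate $v$'' range you propose to ``carry out the comparison directly'' using Pedrosa's formulas (\ref{pedrosa1})--(\ref{pedrosa2}); but those formulas describe only $I_{(S^3\times\re)}$. You have produced no lower bound whatsoever for $I_{(S^2\times\re^2)}$ in that range---indeed, the unavailability of that profile is the entire premise of this paper---so there is nothing to compare against. The shared asymptotic $\gamma_4 v^{3/4}$ at $v\to 0$ tells you the ratio tends to $1$, not which profile dominates, and Lemma~\ref{Fbeta} gives monotonicity of $I(v)/v^{(n-1)/n}$, which again says nothing about the sign of the difference of two such profiles.

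The argument actually used in \cite{PetRu1} is structurally different and avoids bounding $I_{(S^2\times\re^2)}$ directly. One first proves the one-dimension-lower comparison $I_{(S^2\times\re,\,g_0^2+dt^2)}(v)\ge I_{(S^3,\,2g_0^3)}(v)$; both sides here are explicitly known (via Pedrosa and via (\ref{sphere2})--(\ref{sphere1})), and the scaling factor $2$ is exactly what makes the equatorial area of $(S^3,2g_0^3)$ match the eventual constant $2\,\mathrm{Vol}(S^2)=8\pi$ of the cylinder profile. One then takes a further product with $\re$ and applies the Ros--Morgan symmetrization: since the model strip for $(S^2\times\re)\times\re$ dominates, curve by curve, the model strip for $(S^3,2g_0^3)\times\re$, the profile inequality propagates, and on the right-hand side the Ros model recovers the true profile because both factors are model spaces. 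This two-step reduction is what you are missing.
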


We now prove the inequalities of Theorem \ref{Iso}. Recall that the isoperimetric profiles of $(S^m,g_0^m)$ and $(S^m\times\re,g_0^m+dt^2)$ are known (eqs. (\ref{sphere2}),(\ref{sphere1}) and (\ref{pedrosa1}),(\ref{pedrosa2}), respectively) so that comparisons with these profiles  can be done through direct   computations when needed. These will be shown on graphs.

We begin with the proof of inequality (\ref{s2xr2}) of Theorem \ref{Iso} in the next two lemmas. 

 \begin{Lemma}
	\label{22a}
	For $ v\leq 65$, $ I_{(S^2\times \re^2,g_0^2+dt^2)}(v)\geq 0.886 I_{(S^4, 4.7 g_0^4)}(v).$
	
\end{Lemma}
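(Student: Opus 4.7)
The plan is to combine Lemma \ref{S2xR2} with an explicit computation of both profiles. Applying Lemma \ref{S2xR2} reduces the claim to proving
$$I_{(S^3\times \re, 2(g_0^3+dt^2))}(v) \ge 0.886\, I_{(S^4, 4.7 g_0^4)}(v) \qquad \text{for all } v \le 65.$$
Both manifolds have dimension $4$, so the homogeneity identity $I_{(M,\mu g)}(v)=\mu^{3/2} I_{(M,g)}(\mu^{-2}v)$ rewrites the right-hand side in terms of $I_{(S^4, g_0^4)}$, which is given by (\ref{sphere2})--(\ref{sphere1}) with $m=4$, and the left-hand side in terms of $I_{(S^3\times \re, g_0^3+dt^2)}$, which is Pedrosa's profile: the ball-type parameterization (\ref{pedrosa1})--(\ref{pedrosa2}) with $m=3$ up to the critical volume $v^{*}$ at which the ball-type area equals $2V_3$, and the constant value $2V_3$ for volumes past $v^{*}$.

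With both profiles expressed as parameterized one-variable functions, the comparison on the bounded interval $(0,65]$ becomes a direct numerical verification. First I would handle the small-volume regime: since both sides are asymptotic to $\gamma_4 v^{3/4}$ as $v \to 0$ and the factor $0.886$ is strictly less than $1$, the inequality holds with computable slack on an initial interval. Next I would locate Pedrosa's critical volume $v^{*}$ numerically and verify the inequality separately on the ball-type branch and on the cylindrical branch up to $v=65$. To pass from a finite sample to the continuum, I would invoke the concavity of $I^{4/3}$ for each side (Bayle's theorem, as used in the proof of Lemma \ref{smalls}) together with the monotonicity of $I(v)/v^{3/4}$ established in Lemma \ref{Fbeta}, so that pointwise checks on a grid propagate into a uniform bound.

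The main obstacle I anticipate is controlling the comparison across the structural transitions of each profile: the switch from ball-type to cylindrical regions on the left-hand side at $v^{*}$, and the behavior of the right-hand side as $I_{(S^4,4.7 g_0^4)}$ moves from its Euclidean-like regime toward its maximum at half the scaled sphere's volume. Pinning down $v^{*}$ and the resulting match of slopes determines where the worst-case ratio of the two profiles is attained on $(0,65]$, and the explicit constant $0.886$ is chosen to absorb precisely this worst case; the graphical certificate in Figure \ref{S44to65} records the verification.
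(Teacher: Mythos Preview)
Your proposal is correct and follows essentially the same route as the paper: reduce to $S^3\times\re$ via Lemma~\ref{S2xR2}, handle the small-volume regime using the $\gamma_4 v^{3/4}$ asymptotics together with Bayle's concavity and Lemma~\ref{Fbeta} (which is exactly the content of Lemma~\ref{smalls}, applied by the paper at the single test point $v_0=4$), and then verify the remaining range $[4,65]$ by direct computation of the two known profiles, recorded graphically in Figure~\ref{S44to65}. Your discussion of Pedrosa's ball/cylinder transition and of grid-to-continuum propagation is more explicit than the paper's, but the underlying argument is the same.
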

\begin{proof}
	We will prove 
	\begin{equation}
		\label{v65}
		I_{(S^3\times \re, 2 (g_0^3+dt^2))}(v)\geq 0.866  I_{(S^4, 4.7 g_0^4)}(v)
	\end{equation}
	for $v\leq 65$. The Lemma will then follow from inequality of Lemma \ref{S2xR2}.
	
	By direct computation,   $I_{(S^3\times \re, 2 (g_0^3+dt^2))}(4)>(5.5) 4^{\frac{3}{4}}$, which satisfies the hypothesis of Lemma \ref{smalls}.  Hence, for $v\leq 4$:
	
	$$I_{(S^3\times \re, 2 (g_0^3+dt^2))}(v)>\frac{\gamma_4}{5.5}  I_{(S^4, 4.7 g_0^4)}(v)> 0.866  I_{(S^4, 4.7 g_0^4)}(v).$$

	Finally,  for $4 \leq v\leq 65$, inequality (\ref{v65})    can be checked by  direct  computations, since these isoperimetric profiles are known. We provide the graphics (see figure \ref{S44to65}).
	
\end{proof}

 \begin{Lemma}
	\label{22b}
	For $ v\geq 65$, $ I_{(S^2\times \re^2,g_0^2+dt^2)}(v)\geq 0.886 I_{(S^4, 4.7 g_0^4)}(v).$
	
\end{Lemma}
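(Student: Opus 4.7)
My plan is to establish a pointwise lower bound on $I_{(S^2\times\re^2,g_0^2+dt^2)}$ at $v=65$, propagate it to all $v\geq 65$ via Lemma \ref{x0y0}, and then carry out the final comparison with $0.886\,I_{(S^4,4.7g_0^4)}(v)$ using the explicit round-sphere formulas (\ref{sphere2})--(\ref{sphere1}). This combines the Pedrosa branch already used in Lemma \ref{22a} with the forward-extension tool rather than the backward-extension tool.

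For the anchor, Lemma \ref{S2xR2} gives $I_{(S^2\times\re^2,g_0^2+dt^2)}(65)\geq I_{(S^3\times\re,2(g_0^3+dt^2))}(65)$. Applying the homogeneity relation with $\mu=2$ on the $4$-manifold $S^3\times\re$, the right side equals $2^{3/2}\,I_{(S^3\times\re,g_0^3+dt^2)}(65/4)$. A direct inspection using the ball-type formulas (\ref{pedrosa1})--(\ref{pedrosa2}) and comparison with the cylindrical-section value $2V_3=4\pi^2$ confirms that at $v=65/4=16.25$ the Pedrosa minimizer is a cylindrical section, so
\[
I_{(S^2\times\re^2,g_0^2+dt^2)}(65)\;\geq\;2^{3/2}\cdot 2V_3\;=\;8\sqrt{2}\,\pi^2.
\]

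Plugging $x_0=65$ and $y_0=8\sqrt{2}\,\pi^2$ into Lemma \ref{x0y0} yields, for every $v\geq 65$,
\[
I_{(S^2\times\re^2,g_0^2+dt^2)}(v)\;\geq\;\frac{8\sqrt{2}\,\pi^2}{\sqrt{65}}\,\sqrt{v}\;\approx\;13.85\,\sqrt{v},
\]
which in particular dominates the comparator $12.32\sqrt{v}$ that appears in Figure \ref{S465to291}. It then suffices to verify $12.32\sqrt{v}\geq 0.886\,I_{(S^4,4.7g_0^4)}(v)$ for $v\geq 65$. The right side is an explicit concave profile, symmetric about $v=4.7^2V_4/2\approx 290.7$, with global maximum $0.886\cdot 4.7^{3/2}V_3\approx 178.2$. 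Since $12.32\sqrt{290.7}\approx 210>178.2$ and the left side is monotone increasing while the right side decreases on $[290.7,4.7^2V_4]$, the inequality persists for $v\geq 290.7$; beyond the total volume of $(S^4,4.7g_0^4)$ the claim is vacuous. On the intermediate interval $[65,290.7]$ the comparison is a direct computation from (\ref{sphere2})--(\ref{sphere1}), displayed graphically in Figure \ref{S465to291}.

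The main obstacle is controlling the comparison on the intermediate range $[65,290.7]$, where both sides are of comparable magnitude and there is no short monotonicity argument isolating the worst point; here the explicit sphere formulas carry the proof. Choosing a strong enough anchor at $v=65$ is essential: the alternative anchor $0.886\,I_{(S^4,4.7g_0^4)}(65)\approx 96.4$ obtainable from Lemma \ref{22a} propagates via Lemma \ref{x0y0} only to a coefficient $\approx 11.96$, which is not quite large enough to dominate the maximum $\approx 178.2$ of the right side. The Pedrosa-cylinder anchor $8\sqrt{2}\,\pi^2$ is the key ingredient making the argument work.
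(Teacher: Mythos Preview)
Your overall strategy --- anchor at $v=65$ via Lemma~\ref{S2xR2}, propagate forward with Lemma~\ref{x0y0}, then compare with the explicit sphere profile --- is exactly the paper's approach. The problem is the anchor itself.

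You assert that in $(S^3\times\re,g_0^3+dt^2)$ the Pedrosa minimizer at $v=16.25$ is already a cylindrical section, which would give
\[
I_{(S^2\times\re^2)}(65)\ \ge\ I_{(S^3\times\re,\,2(g_0^3+dt^2))}(65)\ =\ 8\sqrt{2}\,\pi^2\ \approx\ 111.7.
\]
This is impossible: the region $S^2\times D^2_R$ of volume $65$ has boundary area $F_{S^2,2}(65)=4\pi\sqrt{65}\approx101.3$, so $I_{(S^2\times\re^2)}(65)\le 101.3<111.7$. The ball-type branch of Pedrosa's profile is in fact still the minimizer at $v=16.25$; evaluating (\ref{pedrosa1})--(\ref{pedrosa2}) at the relevant $\eta$ gives area $\approx 35.1<2V_3=4\pi^2$, and hence in the scaled metric $I_{(S^3\times\re,\,2(g_0^3+dt^2))}(65)\approx 99.4$. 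The transition from balls to slabs in $S^3\times\re$ occurs at a volume strictly larger than $16.25$.

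With the corrected anchor $y_0\approx 99.4$, Lemma~\ref{x0y0} produces the coefficient $99.4/\sqrt{65}\approx 12.33$, which is exactly the paper's $12.32\sqrt{v}$; the remainder of your argument (the comparison with $0.886\,I_{(S^4,4.7g_0^4)}$ on $[65,\,4.7^2V_4/2]$ and the monotonicity observation beyond the maximum) then coincides with the paper's proof. So your instinct that one needs a ``strong enough'' anchor is correct, but the anchor you selected is too strong to be true.
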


\begin{proof}
	
	By direct computation, using equations (\ref{pedrosa1})  and (\ref{pedrosa2}) 
	$$ I_{(S^3\times \re, 2 (g_0^3+dt^2))}(65)\approx99.4$$
So that  $x_0=65$ and $y_0=99.4$   satisfy the hypothesis of Lemma \ref{x0y0}, since

$$99.4\leq  I_{(S^3\times \re, 2 (g_0^3+dt^2))}(65)\leq I_{(S^2\times \re^2,g_0^2+dt^2)}(65) $$
 Lemma \ref{x0y0}, then yields

\begin{equation}
	\label{vsqrt}
	 12.32 v^{\frac{1}{2}}\leq I_{(S^2\times \re^2,g_0^2+dt^2)}(v)
\end{equation}
 for $v\geq 65$. 
 The isoperimetric profile of $(S^4, 4.7 g_0^4)$ is given by eqs. (\ref{sphere2}) and (\ref{sphere1}).
 Using direct   computations we check that,
 \begin{equation}
 	\label{numeric}
  0.886 I_{(S^4, 5 g_0^4)}(v)< 12.32 v^{\frac{1}{2}} 
\end{equation}

\noindent for $v$, $65 \leq v\leq 291$  (see figure \ref{S465to291}).
Note that at $\frac{25 V_4}{2}  \approx 290.69$,  $I_{(S^4, 5 g_0^4)}(v)$ attains its maximum, while $12.32 v^{\frac{1}{2}}$ is nondecreasing. This implies the inequality (\ref{numeric})  is still valid even for  $v\geq \frac{25 V_4}{2}  $. Inequalities (\ref{vsqrt}) and (\ref{numeric}) imply the Lemma. 
\end{proof}


We will use the following results from \cite{PetRu2}.

\begin{Lemma}\textnormal{(Corollary 3.2 of \cite{PetRu2})}
	\label{Is4r1}
For all $v\geq 0$, 
	$$I_{(S^3\times \re^2,  (g_0^2+dt^2))}(v) \geq 0.99 I_{(S^{4}\times \re, 2^{\frac{3}{2}} (g_0^{4}+dt^2))}(v)$$
\end{Lemma}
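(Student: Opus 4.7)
The plan is to follow the template of Lemma~\ref{S2xR2}, which handles the analogous lower-dimensional statement $I_{S^2\times\re^2}(v)\geq I_{(S^3\times\re,\,2(g_0^3+dt^2))}(v)$. The overarching strategy is to reduce an isoperimetric competitor $\Omega\subset S^3\times\re^2$ to a doubly symmetric form via the Ros model-metric machinery, reinterpret it as a region in the rescaled $(S^4\times\re,\,2^{3/2}(g_0^4+dt^2))$, and estimate the perimeter loss under this correspondence.

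First, I would apply Schwarz symmetrization in each factor of $S^3\times\re^2$: the sharp $2$-dimensional Euclidean isoperimetric inequality allows each slice $\Omega\cap(\{p\}\times\re^2)$ to be replaced by a round disk of the same area without increasing the boundary area, and the Ros product theorem allows the analogous spherical symmetrization in the $S^3$ factor. As in the proof of Theorem~\ref{Thm1}, the doubly symmetrized competitor is encoded by a monotone curve in the stripe $(0,V_3)\times(0,\infty)$ with perimeter measured in the Ros model metric $ds^2=h_3^2(y)\,dx^2+f_2^2(x)\,dy^2$, where $h_3$ is the concave isoperimetric profile of $S^3$ and $f_2(x)=2\sqrt{\pi x}$ is that of $\re^2$.

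Second, I would describe $I_{(S^4\times\re,\,2^{3/2}(g_0^4+dt^2))}(v)$ explicitly using Pedrosa's classification: its isoperimetric regions are either ball-type, given by formulas~(\ref{pedrosa1})--(\ref{pedrosa2}) after the appropriate rescaling, or cylindrical sections whose boundary has constant $4$-volume $2(2^{3/2})^2 V_4 = 8V_4$. The factor $2^{3/2}$ is chosen so that, under the natural volume-preserving correspondence between symmetrized competitors on the two sides, the perimeters differ by at most the factor $0.99$; I would establish this pointwise comparison by writing both perimeter functionals as one-dimensional integrals in a common parameter and bounding their ratio.

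The main obstacle is locking in the sharp numerical constant $0.99$. For very small $v$, both profiles behave like $\gamma_5 v^{4/5}$, so the inequality follows from Lemma~\ref{smalls} once a pointwise bound at one value $v_0$ is secured; for very large $v$ the right-hand side saturates at $8V_4$ while the left-hand side grows at least like $2\sqrt{\pi V_3}\,\sqrt{v}$ (by Theorem~\ref{Thm1}), so the inequality is automatic. The delicate range is intermediate $v$, where the extremal type on the right transitions between ball-type and cylindrical. I expect this range to be handled by an explicit numerical/graphical verification using Pedrosa's formulas together with Lemma~\ref{x0y0}, exactly in the spirit of Lemmas~\ref{22a}--\ref{22b} for the case of $S^2\times\re^2$.
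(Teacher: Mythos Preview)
The paper does not prove this lemma: it is quoted as Corollary~3.2 of \cite{PetRu2} and used as a black-box input, so there is no argument in the present paper against which to compare your proposal.

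That said, your outline is broadly in the spirit of the symmetrization and model-metric comparisons of \cite{PetRu1} and \cite{PetRu2}, but it remains a plan rather than a proof. The decisive step---the pointwise comparison of the Ros model perimeter $\sqrt{h_3^2(y)\,dx^2+f_2^2(x)\,dy^2}$ against the corresponding one-dimensional functional for $(S^4\times\re,\,2^{3/2}(g_0^4+dt^2))$, and the reason the specific constants $2^{3/2}$ and $0.99$ fall out---is asserted (``I would establish this pointwise comparison\ldots and bound their ratio'') but not carried out; that is exactly the content of the cited corollary. There is also a small arithmetic slip in your large-$v$ discussion: the cylindrical boundary area on the right is $2\cdot(2^{3/2})^{2}V_4=16V_4$, not $8V_4$. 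To make the argument self-contained you would have to reproduce the explicit profile-comparison lemma from \cite{PetRu2} rather than defer to it.
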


\begin{Lemma}\textnormal{(Lemma 4.1 of \cite{PetRu2})}
	
	\label{Is4r2}
	For all $v\geq 0$, we have $I_{(S^2\times \re^3,  (g_0^2+dt^2))}(v) \geq 0.99 I_{(S^{4}\times \re, 2^{\frac{5}{3}} (g_0^{4}+dt^2))}(v).$
\end{Lemma}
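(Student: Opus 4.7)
The plan is to adapt the Morgan/Ros model-metric argument used in Theorem \ref{Thm1} and in Lemma \ref{S2xR2}, replacing the two Euclidean dimensions in the latter by three. First, invoking the Ros product theorem, an isoperimetric region $\Omega\subset S^2\times\re^3$ may be simultaneously symmetrized in both factors, so that each $\re^3$-slice becomes a Euclidean $3$-ball and each $S^2$-slice a geodesic disk. Consequently, $I_{(S^2\times\re^3,g_0^2+dt^2)}(v)$ equals the infimum of
$$P(E)=\int_{\delta E}\sqrt{h_2^2(y)\,dx^2+f_3^2(x)\,dy^2}$$
over closed planar regions $E\subset(0,V_2)\times(0,\infty)$ of Euclidean area $v$, where $h_2=I_{(S^2,g_0^2)}$ and $f_3(x)=\gamma_3 x^{2/3}$.

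The same Morgan/Ros construction applied to $(S^4\times\re,\,2^{5/3}(g_0^4+dt^2))$ yields an analogous one-dimensional problem on $(0,2^{10/3}V_4)\times(0,\infty)$, with area element
$$\sqrt{\tilde h_4^2(y)\,dx^2+4\,dy^2}, \qquad \tilde h_4=I_{(S^4,2^{5/3}g_0^4)},$$
since the isoperimetric profile of $\re$ in any Riemannian metric is the constant $2$. I would then transport a minimizer $\delta E$ of the first variational problem to a boundary curve $\delta\tilde E$ of the same enclosed area via an area-preserving reparameterization $(x,y)\mapsto(\varphi(x),\psi(y))$, with $\varphi$ sending $(0,V_2)$ onto $(0,2^{10/3}V_4)$, and compare the two integrands pointwise. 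A pointwise inequality of the form
$$\sqrt{h_2^2(y)\,dx^2+f_3^2(x)\,dy^2}\geq 0.99\sqrt{\tilde h_4^2(\psi(y))(\psi'(y))^2\,dx^2+4(\varphi'(x))^2\,dy^2}$$
then yields $P(E)\geq 0.99\,\tilde P(\delta\tilde E)\geq 0.99\,I_{(S^4\times\re,2^{5/3}(g_0^4+dt^2))}(v)$.

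The hard part is the uniform pointwise comparison of the two area elements. The specific scaling $2^{5/3}$ is chosen precisely to balance the ratios $h_2(y)/(\tilde h_4(\psi(y))\psi'(y))$ and $f_3(x)/(2\varphi'(x))$ so that both are simultaneously bounded below by $0.99$. Reducing this to an explicit one-variable numerical inequality between the closed-form profiles $h_2$, $\tilde h_4$ and the power law $\gamma_3 x^{2/3}$ is the main technical content of the lemma, and the constant $0.99$ is essentially the sharpest uniform bound this method produces.
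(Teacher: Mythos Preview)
The paper does not prove this lemma; it is quoted verbatim from \cite{PetRu2} (Lemma 4.1 there) and used as a black box. So there is no ``paper's own proof'' to compare against, and your sketch is really an attempt to reconstruct the argument from the cited reference.

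Your general strategy---double symmetrization via the Ros product theorem, reduction to a planar model problem, and a comparison of the two model perimeter integrands---is indeed the framework used in \cite{PetRu1,PetRu2} for this family of inequalities. However, the specific mechanism you describe has a genuine gap. An ``area-preserving'' map of product form $(x,y)\mapsto(\varphi(x),\psi(y))$ has Jacobian $\varphi'(x)\psi'(y)$, and requiring this to be identically $1$ forces both $\varphi$ and $\psi$ to be affine. A linear rescaling of the strip will not by itself produce the pointwise inequality you want between $h_2(y)$, $f_3(x)$ and the $S^4\times\re$ integrand; the point of the scaling $2^{5/3}$ is more delicate than just matching strip widths. (There is also a notational slip: you say $\varphi$ maps $(0,V_2)$, which is the $y$-range, but then write $\varphi'(x)$; and in your displayed pointwise inequality the derivatives $\psi'$ and $\varphi'$ appear swapped relative to the substitution $\tilde x=\varphi(x)$, $\tilde y=\psi(y)$.)

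What the actual arguments in \cite{PetRu1,PetRu2} do is not transport the region directly, but rather compare the model profiles themselves: one shows, via the Ros product theorem applied iteratively (e.g.\ writing $S^2\times\re^3=(S^2\times\re^2)\times\re$ and invoking the already-established bound for $S^2\times\re^2$, or a similar reduction), that the two-dimensional model problem for $S^2\times\re^3$ dominates $0.99$ times the one for $S^4\times\re$ at each volume. The constant $0.99$ and the scale $2^{5/3}$ come out of a numerical optimization in that chain of comparisons, not from a single change of variables. Your last paragraph gestures at this, but the middle step as written does not get you there.
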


\begin{figure}
	\includegraphics[scale=.25]{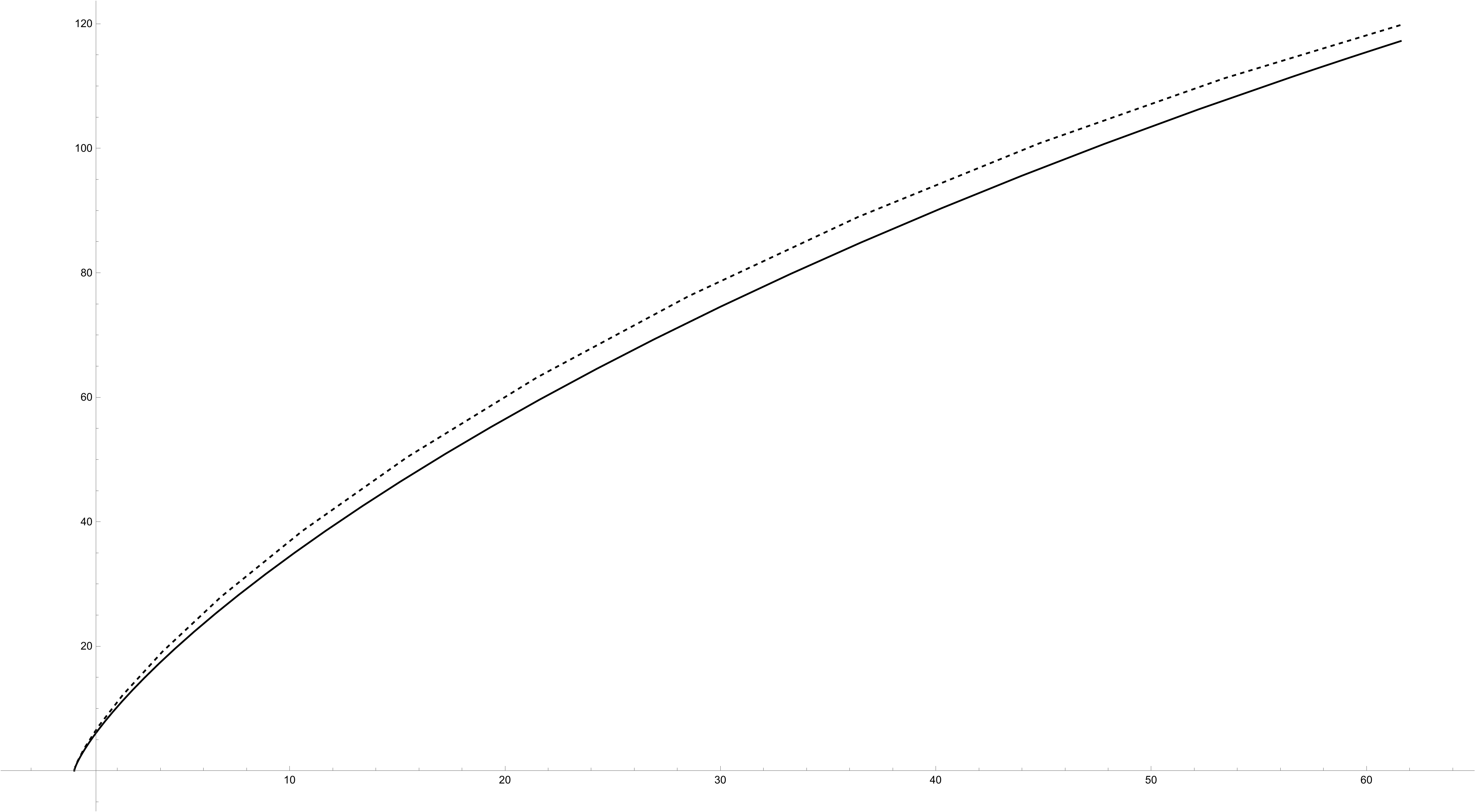}
		\caption{Comparison of   $ I_{(S^4\times \re, 2^{\frac{3}{2}} (g_0^4+dt^2))}(v)$ (dashed) and  $(0.91 ) I_{(S^5, 2.77 g_0^5)}(v)$ (continuous) for $1\leq v\leq 60 $.}
		\label{S51to60}
\end{figure}

\begin{figure}
	\includegraphics[scale=.4]{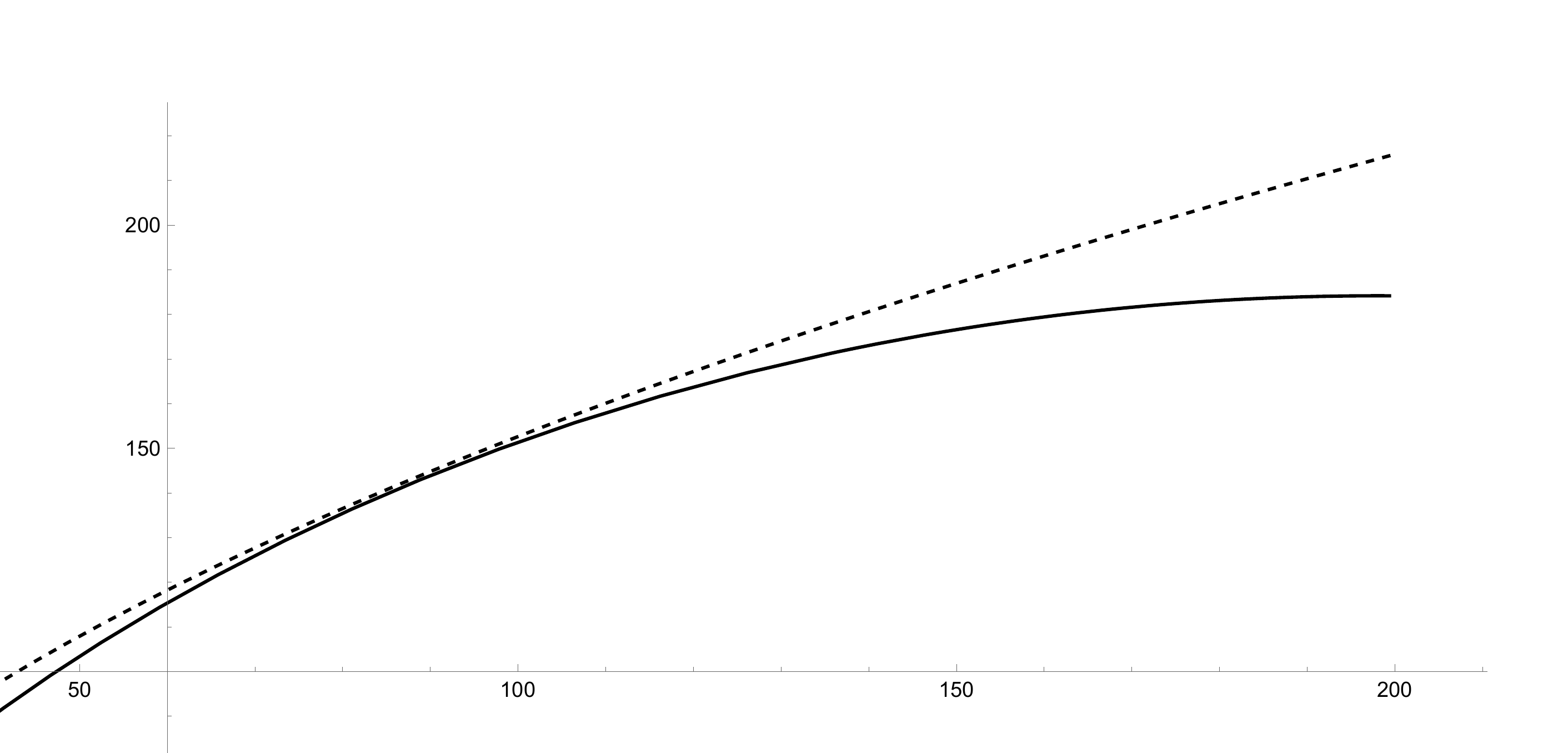}
		\caption{Comparison of   $ 15.26 v^{\frac{1}{2}}$ (dashed) and  $(0.91 )I_{(S^5, 2.77 g_0^5)}(v)$ (continuous) for $60\leq v\leq 200 $.}
		\label{S560to199}
\end{figure}

We now prove inequalities (\ref{s3xr2}) and (\ref{s2xr3}) from Theorem \ref{Iso}.

 \begin{Lemma}
	\label{32}
	For $v\geq0$, $ I_{(S^3\times \re^2,  (g_0^2+dt^2))}(v) \geq (0.91) I_{(S^5, 2.77 g_0^5)}(v).$
	
\end{Lemma}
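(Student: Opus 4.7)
The plan is to follow the template established in Lemmas \ref{22a} and \ref{22b}. Invoking Lemma \ref{Is4r1} reduces the claim to proving
$$ I_{(S^4\times \re,\, 2^{3/2}(g_0^4+dt^2))}(v) \geq \frac{0.91}{0.99}\, I_{(S^5,\, 2.77\, g_0^5)}(v) $$
for every $v\geq 0$, after which the stated bound follows by composition. I would then split the range of $v$ into three regimes (small, intermediate, and large) and handle each separately.

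For small $v$ I would invoke Lemma \ref{smalls}. Both $(S^4\times \re,\, 2^{3/2}(g_0^4+dt^2))$ and $(S^5,\, 2.77\, g_0^5)$ have non-negative Ricci curvature and share the same dimension $5$, so by Bayle's theorem both renormalized profiles $J=I^{5/4}$ are concave, which is the hypothesis Lemma \ref{smalls} needs. By direct numerical evaluation of Pedrosa's formulas (\ref{pedrosa1})-(\ref{pedrosa2}) at a suitable threshold $v_0$ chosen at or slightly below the left endpoint of the intermediate window, I would verify an inequality of the form $I_{(S^4\times \re,\,2^{3/2}(g_0^4+dt^2))}(v_0) > k\, v_0^{4/5}$ with $k/\gamma_5 \geq 0.91/0.99$; Lemma \ref{smalls} then propagates the required estimate down to all $v\leq v_0$.

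On the intermediate regime $v_0 \leq v \leq 60$ both profiles are explicitly known (Pedrosa's formulas (\ref{pedrosa1})-(\ref{pedrosa2}) on the left and the spherical formulas (\ref{sphere2})-(\ref{sphere1}) on the right), so the desired inequality becomes a one-variable numerical comparison; this is exactly what Figure \ref{S51to60} records. For large $v$ I would apply Lemma \ref{x0y0} directly to $S^3\times \re^2$, with $n=2$ and $x_0=60$, taking $y_0 = 0.99\, I_{(S^4\times \re,\, 2^{3/2}(g_0^4+dt^2))}(60)$, which is a valid lower bound for $I_{(S^3\times \re^2)}(60)$ by Lemma \ref{Is4r1}. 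Lemma \ref{x0y0} then yields the asymptotic estimate $I_{(S^3\times \re^2)}(v) \geq 15.26\, v^{1/2}$ for all $v\geq 60$. A second numerical check (Figure \ref{S560to199}) shows $15.26\, v^{1/2} \geq 0.91\, I_{(S^5,\, 2.77\, g_0^5)}(v)$ on the range $60 \leq v \leq \tfrac{1}{2}\Vol(S^5,\, 2.77\, g_0^5)$; past the half-volume point the spherical profile is nonincreasing while $15.26\, v^{1/2}$ is monotone, so the inequality persists automatically on the rest of the range.

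The main obstacle is the calibration in the small-$v$ step: one must choose $v_0$ small enough that the intermediate numerical window $[v_0,60]$ remains tractable, yet large enough that the Pedrosa computation at $v_0$ produces a constant $k$ with $k/\gamma_5$ comfortably above $0.91/0.99 \approx 0.919$. All other steps are either direct invocations of the auxiliary Lemmas \ref{Is4r1}, \ref{smalls}, \ref{x0y0} or routine numerical verifications illustrated by the two figures.
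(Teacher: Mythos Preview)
Your proposal is correct and follows essentially the same approach as the paper's proof: the paper also reduces via Lemma~\ref{Is4r1}, splits into the three cases $0\le v\le 1$, $1\le v\le 60$, $v\ge 60$, and handles them respectively by Lemma~\ref{smalls} (with the check $0.99\,I_{(S^4\times\re,\,2^{3/2}(g_0^4+dt^2))}(1)>6.5$ so that $6.5/\gamma_5>0.91$), direct numerical comparison (Figure~\ref{S51to60}), and Lemma~\ref{x0y0} applied to $S^3\times\re^2$ at $x_0=60$, $y_0=118.245$ followed by the numerical check in Figure~\ref{S560to199} and the half-volume monotonicity argument. Your only unspecified choice is the threshold $v_0$, and the paper's $v_0=1$ makes the calibration you flagged go through comfortably.
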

\begin{proof}

	
	\textbf{Case 1:}  For $0\leq v\leq1$.

By direct computation, $0.99 I_{(S^{4}\times \re, 2^{\frac{3}{2}} (g_0^4+dt^2))}(1)>   6.5.$
Which we use as hypothesis of Lemma \ref{smalls}, to obtain: for $0\leq v\leq v_1$,
\begin{equation}
	\label{32case1}
0.99 I_{(S^{4}\times \re, 2^{\frac{3}{2}} (g_0^{4}+dt^2))}(v)>  \frac{ 6.5}{\gamma_5} I_{(S^{5},2.77 g_0^{5})}(v)>(0.91 )I_{(S^{5},2.77 g_0^{5})}(v).
\end{equation}

From this   and lemma \ref{Is4r1}, it follows that  the lemma is satisfied for $v\leq 1$.

\textbf{Case 2:}  For $1\leq v\leq60$.
	
Since the isoperimetric profiles $ I_{(S^{4}\times \re, 2^{\frac{3}{2}} (g_0^{4}+dt^2))}(v)$ and $I_{(S^{5},2.77 g_0^{5})}(v)$ are known, it follows from direct   computations that
$$	0.99 I_{(S^{4}\times \re, 2^{\frac{3}{2}} (g_0^{4}+dt^2))}(v)> (0.91 )I_{(S^{5},2.77 g_0^{5})}(v)$$
	
\noindent   for $1\leq v\leq60$, see figure \ref{S51to60}.
From this and lemma \ref{Is4r1}, the case is satisfied.

	\textbf{Case 3:} For $v \geq 60$.

We have 
$$I_{(S^3\times \re^2,  (g_0^2+dt^2))}(60)>	0.99 I_{(S^{4}\times \re, 2^{\frac{3}{2}} (g_0^{4}+dt^2))}(60)>118.245$$

\noindent where the first inequality follows from   lemma \ref{Is4r1}, and the second from direct computation. 
We may then use $x_0=60$, $y_0=118.245$ as the hypothesis for lemma \ref{x0y0}, and obtain,  for $v\geq 60$,
$I_{(S^3\times \re^2,  (g_0^2+dt^2))}(v)>15.26 v^{\frac{1}{2}}.$

On the other hand, by direct  computations (see figure \ref{S560to199}),  $15.26 v^{\frac{1}{2}}> (0.91 )I_{(S^{5},2.77 g_0^{5})}(v)$,
 for $60\leq v \leq 200$. So that combined, these last two inequalities give, for $60\leq v \leq 200$,

\begin{equation}
	\label{last32}
I_{(S^3\times \re^2,  (g_0^2+dt^2))}(v)>(0.91 )I_{(S^{5},2.77 g_0^{5})}(v)
\end{equation}

Finally, we note that $I_{(S^{5},2.77 g_0^{5})}(v)$ reaches its maximum at $v=\frac{Vol((S^{5},2.77 g_0^{5}))}{2}\approx198.4$, while $I_{(S^3\times \re^2,  (g_0^2+dt^2))}(v)$ is non-decreasing, by Lemma \ref{x0y0}. This implies that eq. (\ref{last32}) is still valid for $v \geq 200$.

\end{proof}

\begin{figure}
	
	\includegraphics[scale=.2]{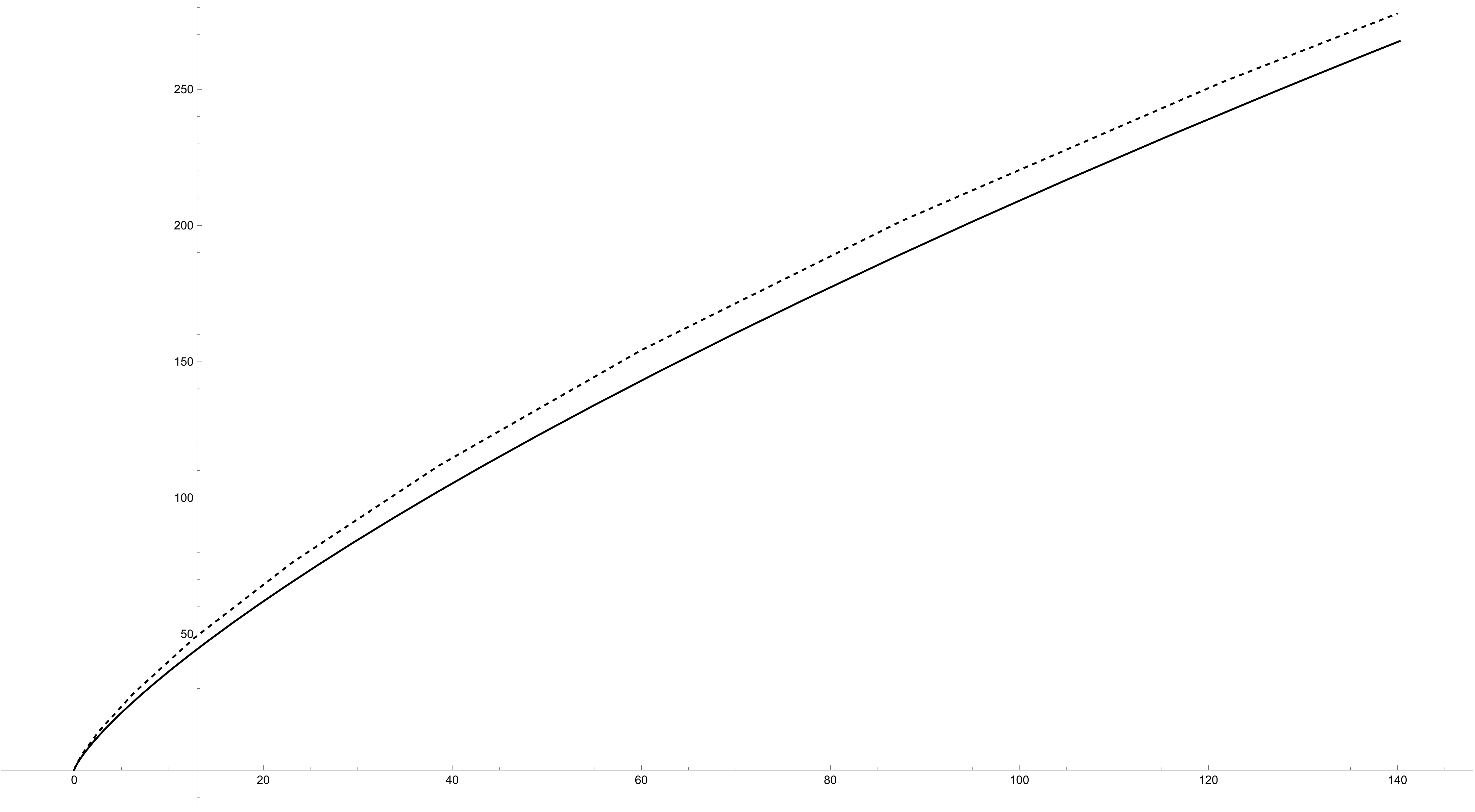}
	\caption{Comparison of   $ I_{(S^4\times \re, 2^{\frac{5}{3}} (g_0^4+dt^2))}(v)$ (dashed) and  $(0.867) I_{(S^5, 7.5 g_0^5)}(v)$ (continuous) for $13\leq v\leq 140 $.}
	\label{S513to140}
\end{figure}

\begin{figure}
	\includegraphics[scale=.2]{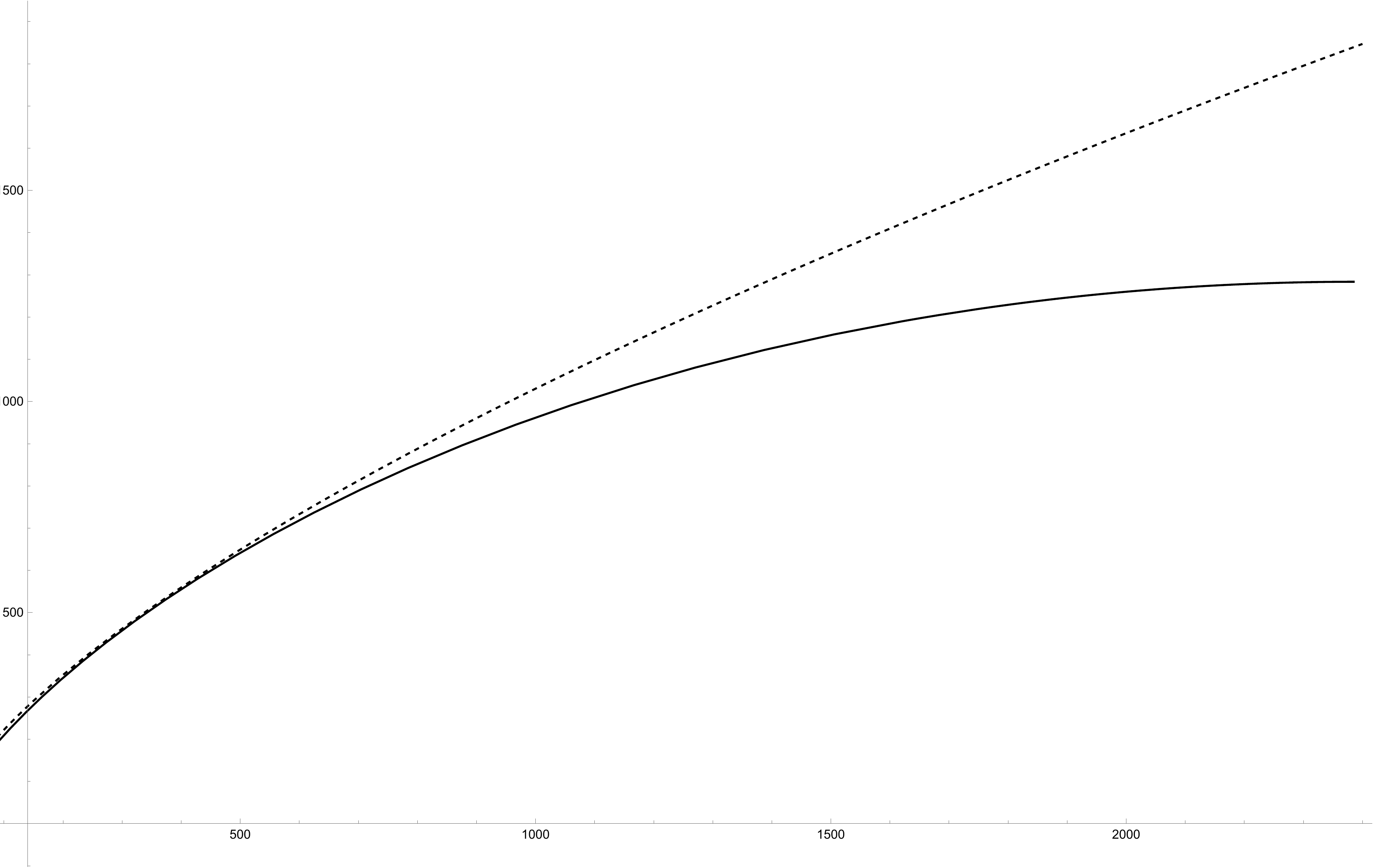}
	\caption{Comparison of   $ 10.3 v^{\frac{2}{3}}$ (dashed) and  $(0.867)I_{(S^5, 7.5 g_0^5)}(v)$ (continuous) for $140\leq v\leq 2389 $.}
	\label{S5140to2300}
\end{figure}

 \begin{Lemma}
	\label{23}
	For $v\geq0$, $ I_{(S^2\times \re^3,  (g_0^2+dt^2))}(v) \geq 0.867 I_{(S^5, 7.5 g_0^5)}(v)$.
	
\end{Lemma}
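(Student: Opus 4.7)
The proof will mirror the three-case strategy used for Lemma \ref{32}, with the factor $0.99 I_{(S^4\times\re,\, 2^{5/3}(g_0^4+dt^2))}(v)$ from Lemma \ref{Is4r2} playing the role that Lemma \ref{Is4r1} played before. The three regimes are suggested by the figures: small volumes ($v$ below some $v_1$, say $v_1=13$), the intermediate window $13\leq v\leq 140$ handled by direct numerical comparison, and the tail $v\geq 140$ handled by a $v^{2/3}$ lower bound coming from Lemma \ref{x0y0}.

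For the \textbf{small-volume case} $0\leq v\leq 13$, the plan is to evaluate $I_{(S^4\times \re,\, 2^{5/3}(g_0^4+dt^2))}$ at $v=13$ using Pedrosa's formulas (\ref{pedrosa1})--(\ref{pedrosa2}) and check by direct computation that $0.99\, I_{(S^4\times \re,\, 2^{5/3}(g_0^4+dt^2))}(13) > k \cdot 13^{4/5}$ for some constant $k$ with $k/\gamma_5 \geq 0.867$. Plugging this into Lemma \ref{smalls} (applied with $M=S^4\times \re$ in the rescaled metric and $N=S^5$ in the rescaled metric, both of which are non-negatively Ricci-curved) gives
\begin{equation*}
0.99\, I_{(S^4\times \re,\, 2^{5/3}(g_0^4+dt^2))}(v) \;>\; \tfrac{k}{\gamma_5}\, I_{(S^5,\, 7.5\, g_0^5)}(v) \;\geq\; 0.867\, I_{(S^5,\, 7.5\, g_0^5)}(v)
\end{equation*}
for all $0\leq v\leq 13$, and combining with Lemma \ref{Is4r2} finishes this regime.

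For the \textbf{intermediate case} $13\leq v\leq 140$, both sides are given by closed-form expressions, so the inequality $0.99\, I_{(S^4\times \re,\, 2^{5/3}(g_0^4+dt^2))}(v) \geq 0.867\, I_{(S^5,\, 7.5\, g_0^5)}(v)$ is checked by a plot, which is precisely figure \ref{S513to140}. Again Lemma \ref{Is4r2} upgrades this to the desired inequality.

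For the \textbf{large-volume case} $v\geq 140$, the plan is to use direct computation with Pedrosa's formulas together with Lemma \ref{Is4r2} to obtain an estimate of the form $I_{(S^2\times \re^3,\, g_0^2+dt^2)}(140)\geq y_0$ for some explicit $y_0$ chosen so that $y_0/140^{2/3} \geq 10.3$. Lemma \ref{x0y0} (with $n=3$, so $\frac{n-1}{n}=\frac{2}{3}$) then yields
\begin{equation*}
I_{(S^2\times \re^3,\, g_0^2+dt^2)}(v) \;\geq\; 10.3\, v^{2/3} \quad \text{for all } v\geq 140.
\end{equation*}
One then checks numerically (figure \ref{S5140to2300}) that $10.3\, v^{2/3} \geq 0.867\, I_{(S^5,\, 7.5\, g_0^5)}(v)$ on the window $140\leq v\leq \frac{V_5\cdot 7.5^{5/2}}{2}\approx 2389$, after which $I_{(S^5,\, 7.5\, g_0^5)}$ attains its maximum and begins decreasing symmetrically while $v^{2/3}$ is still nondecreasing, so the inequality persists for all $v\geq 140$.

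The main obstacle is purely computational: picking the thresholds $v_1=13$, the intermediate endpoint $140$, and the constants $k$ and $10.3$ consistently, so that (i) Lemma \ref{smalls} applies with coefficient at least $0.867$, (ii) the intermediate numerical check is actually valid on the full interval, and (iii) the tail bound $10.3\, v^{2/3}$ dominates $0.867\, I_{(S^5,\, 7.5\, g_0^5)}(v)$ up through the volume at which the sphere profile peaks. Once these constants are fixed and the corresponding evaluations of (\ref{pedrosa1})--(\ref{pedrosa2}) and (\ref{sphere2})--(\ref{sphere1}) are performed, the three cases concatenate to give the lemma for all $v\geq 0$.
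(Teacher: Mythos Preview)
Your proposal is correct and matches the paper's proof essentially line for line: the same three-case split at $v=13$ and $v=140$, the use of Lemma~\ref{smalls} with the point evaluation at $v=13$ (the paper takes $k=6.34$), the direct plot comparison on $[13,140]$ via figure~\ref{S513to140}, and the tail argument via Lemma~\ref{x0y0} with $x_0=140$, $y_0\approx 277.8$ giving the $10.3\,v^{2/3}$ bound, followed by the numerical check up to $v\approx 2389$ where the sphere profile peaks.
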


\begin{proof}
	\textbf{Case 1:}  For $0\leq v\leq13$.
	
	By direct computation $0.99 I_{(S^{4}\times \re, 2^{\frac{5}{3}}  (g_0^{4}+dt^2))}(13)> 13^{\frac{4}{5}} (6.34)$,	
	which we use as hypothesis of Lemma \ref{smalls}, to obtain: for $0\leq v\leq 13$,
	\begin{equation}
		\label{23case1}
		0.99 I_{(S^{4}\times \re, 2^{\frac{5}{3}}  (g_0^{4}+dt^2))}(v)>  \frac{ 6.34}{\gamma_5} I_{(S^{5},7.5 g_0^{5})}(v)>(0.867 )I_{(S^{5},7.5 g_0^{5})}(v)
	\end{equation}
	
	From this and lemma \ref{Is4r2}, we conclude case 1.
	
	\textbf{Case 2:}  For $13\leq v\leq140$.

Since the isoperimetric profiles $ I_{(S^{4}\times \re,2^{\frac{5}{3}}  (g_0^{4}+dt^2))}(v)$ and $I_{(S^{5},7.5 g_0^{5})}(v)$ are known, it follows from direct   computations (see figure \ref{S513to140}) that for $13\leq v\leq140$,
$$	0.99 I_{(S^{4}\times \re, 2^{\frac{5}{3}}  (g_0^{4}+dt^2))}(v)> (0.867 )I_{(S^{5},7.5 g_0^{5})}(v)$$
 
From this inequality and lemma \ref{Is4r2}, this case is satisfied.

\textbf{Case 3:} For $v \geq 140$.

By direct computation, $	0.99 I_{(S^{4}\times \re, 2^{\frac{3}{2}} (g_0^{4}+dt^2))}(140)>277.8$. Hence, by   lemma \ref{Is4r2}: $I_{(S^2\times \re^3,  (g_0^2+dt^2))}(140)>277.8$. We  then use $x_0=140$, $y_0=277.8$ as the hypothesis for Lemma \ref{x0y0} and get, for $v\geq 140$, $I_{(S^2\times \re^3,  (g_0^2+dt^2))}(v)>10.3 v^{\frac{2}{3}}.$

Now, from direct  computations (see figure \ref{S5140to2300}), we have	$10.3 v^{\frac{2}{3}}> (0.867)I_{(S^{5},7.5g_0^{5})}(v)$,  for $140\leq v \leq 2389$.	 Hence, we have,  for $140\leq v \leq 2389$,
	
	\begin{equation}
		\label{last23}
		I_{(S^2\times \re^3,  (g_0^2+dt^2))}(v)>(0.867)I_{(S^{5},7.5 g_0^{5})}(v).
	\end{equation}

	Finally, we note that $I_{(S^{5},2.77 g_0^{5})}(v)$ reaches its maximum at $v=\frac{Vol((S^{5},7.5 g_0^{5}))}{2}\approx 2388.21$, while $I_{(S^2\times \re^3,  (g_0^2+dt^2))}(v)$ is non decreasing, by Lemma \ref{x0y0}. This implies that inequality (\ref{last23}) is still valid for $v \geq 2389$.

\end{proof}

Lemmas \ref{22a},  \ref{22b},  \ref{32},  \ref{23} imply Theorem \ref{Iso}.

Corollary \ref{Yamabe} follows immediately from Theorem \ref{Iso} in this article and Theorem 1.1 in \cite{PetRu2}. We note that $I_{(M^m\times \re^n,g+dx^2)}$ is non-decreasing, by Lemma \ref{x0y0}.

\begin{Theorem} \textnormal{(Theorem 1.1 in \cite{PetRu2})}
	Let $(M^m, g)$ be a closed Riemannian manifold with scalar curvature $s_g\geq m(m-1)$. If $I_{(M^m\times \re^n,g+dx^2)}$ is a non-decreasing
	function and $I_{(M^m\times \re^n,g+dx^2)}\geq \lambda I_{(S^{n+m},\mu g_0^{m+n})}$, then
$$Y(S^m\times \re^n,[g+dx^2])\geq min\left\{\frac{\mu m(m-1)}{(m+n)(m+n-1)}, \lambda^2\right\}Y(S^{m+n})$$
\end{Theorem}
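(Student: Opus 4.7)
My plan is to exploit the classical duality between isoperimetric inequalities and Sobolev-type inequalities for the conformal Laplacian. Let $N=m+n$, $a_N=\frac{4(N-1)}{N-2}$ and $p^{*}=\frac{2N}{N-2}$, and let $u$ be a smooth, compactly supported, nonnegative test function on $M^m\times\re^n$. The Yamabe quotient to bound is
$$Q(u)=\frac{\int_{M\times\re^n}\bigl(a_N|\nabla u|^2+s_{g+dx^2}\,u^2\bigr)\,dvol}{\bigl(\int_{M\times\re^n}u^{p^{*}}\,dvol\bigr)^{2/p^{*}}}.$$
First I would symmetrize $u$ to a rotationally symmetric, radially decreasing function $u^{*}$ on $(S^N,\mu g_0^{m+n})$, chosen equimeasurable with $u$, i.e.\ $\mathrm{Vol}_{\mu g_0}(\{u^{*}>t\})=\mathrm{Vol}_{g+dx^2}(\{u>t\})$ for every $t>0$. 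By the layer-cake formula this preserves $L^q$ norms for every $q\geq 1$, so the denominator of $Q$ is unchanged in passing from $u$ to $u^{*}$, and in particular $\int u^2\,dvol=\int(u^{*})^2\,dvol_{\mu g_0}$.

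Next I would establish a P\'olya--Szeg\H{o} type gradient inequality with factor $\lambda^2$. Setting $V(t)=\mathrm{Vol}_{g+dx^2}(\{u>t\})$, the coarea formula combined with Cauchy--Schwarz yields, for a.e.\ $t$,
$$\int_{\{u=t\}}|\nabla u|\,d\sigma\ \geq\ \frac{P(\{u>t\})^2}{-V'(t)}\ \geq\ \frac{I_{M\times\re^n}(V(t))^{2}}{-V'(t)}\ \geq\ \lambda^{2}\,\frac{I_{(S^N,\mu g_0)}(V(t))^{2}}{-V'(t)},$$
the last step invoking the hypothesis $I_{M\times\re^n}\geq \lambda I_{(S^N,\mu g_0)}$. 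Since $u^{*}$ is the Schwarz rearrangement on a model manifold whose isoperimetric regions are geodesic balls, the same chain is an \emph{equality} for $u^{*}$ (its superlevel sets saturate the isoperimetric profile and $|\nabla u^{*}|$ is constant on each level set). Integrating in $t$ via coarea delivers
$$\int_{M\times\re^n}|\nabla u|^2\,dvol\ \geq\ \lambda^{2}\int_{S^N}|\nabla u^{*}|^{2}\,dvol_{\mu g_0}.$$

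For the scalar curvature term I use $s_{g+dx^2}=s_g\geq m(m-1)$ (the flat factor contributes nothing), together with $s_{\mu g_0^{m+n}}=N(N-1)/\mu$ and the equimeasurability identity $\int u^2=\int(u^{*})^2$:
$$\int s_{g+dx^2}\,u^2\,dvol\ \geq\ m(m-1)\!\int(u^{*})^{2}\,dvol_{\mu g_0}\ =\ \frac{\mu\,m(m-1)}{(m+n)(m+n-1)}\!\int s_{\mu g_0^{m+n}}\,(u^{*})^{2}\,dvol_{\mu g_0}.$$
Setting $C=\min\!\bigl\{\lambda^{2},\ \mu m(m-1)/[(m+n)(m+n-1)]\bigr\}$, the two estimates combine to give $Q(u)\geq C\cdot Q_{\mu g_0^{m+n}}(u^{*})\geq C\cdot Y(S^N,[\mu g_0^{m+n}])=C\cdot Y(S^N)$ by conformal invariance of the Yamabe constant. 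Taking the infimum over admissible $u$ then yields the theorem.

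The main obstacle will be making the symmetrization rigorous on the noncompact factor: compactly supported test functions can have superlevel sets of arbitrarily large volume, possibly exceeding $\mathrm{Vol}(S^N,\mu g_0^{m+n})$, so one must either restrict to a dense subclass of admissible $u$ whose support volumes fit into the target, or argue by approximation exploiting the scale invariance of $Q$ together with the monotonicity hypothesis on $I_{M\times\re^n}$. Once that is handled, the Schwarz rearrangement's characterization of level sets as geodesic balls and the standard regularity of the rearrangement render the P\'olya--Szeg\H{o} step routine.
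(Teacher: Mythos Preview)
The present paper does not prove this statement: it is quoted as Theorem~1.1 of \cite{PetRu2} and invoked as a black box to pass from the isoperimetric bounds of Theorem~\ref{Iso} to the Yamabe bounds of Corollary~\ref{Yamabe}. There is therefore no proof here against which to compare your attempt.

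Your outline is nonetheless the standard route, and it is the one taken in \cite{PetRu2} (building on \cite{Pet} and \cite{PetRu1}): equimeasurable rearrangement of a nonnegative test function onto the model sphere, the coarea/Cauchy--Schwarz chain turning the isoperimetric hypothesis into a P\'olya--Szeg\H{o} inequality with loss $\lambda^{2}$, and the direct comparison of scalar-curvature terms via $s_{g+dx^{2}}=s_g\ge m(m-1)$ together with $s_{\mu g_0^{m+n}}=(m+n)(m+n-1)/\mu$. The obstacle you isolate---that a compactly supported test function may have superlevel sets of volume exceeding $\mathrm{Vol}(S^{m+n},\mu g_0^{m+n})$, so the rearrangement onto the sphere is not a priori well defined---is genuine, and it is exactly where the hypothesis that $I_{M\times\re^{n}}$ be nondecreasing enters in \cite{PetRu2}. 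Your approximation/scale-invariance suggestion is on the right track; in the original argument the monotonicity of $I_{M\times\re^{n}}$, combined with the fact that $I_{(S^{m+n},\mu g_0)}$ is symmetric about half-volume and vanishes at full volume, is what allows one to control the contribution of the ``overflow'' levels and close the estimate. So your plan is correct and aligned with the source; the monotonicity clause in the hypotheses is not decorative but is precisely the ingredient that resolves the gap you flagged.
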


 We also recall Theorem 1.1 in \cite{AFP}:
 \begin{Theorem} \textnormal{(Theorem 1.1 in \cite{AFP})}
 	 Let $(M^m, g)$ be a closed Riemannian m-manifold $(m \geq 2)$
 	of positive scalar curvature and $(N^n,h)$ any Riemannian closed n-manifold $n \geq 2$:
 	Then,
 	
 	$$\lim_{r\rightarrow \infty}Y(M\times \re^n,[g+rh])=Y(M\times \re^n,[g+g_E])$$
 \end{Theorem}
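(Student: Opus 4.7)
The plan is to prove the two inequalities $\limsup_{r\to\infty} Y(M\times N,[g+rh])\le Y(M\times\re^n,[g+g_E])$ and $\liminf_{r\to\infty} Y(M\times N,[g+rh])\ge Y(M\times\re^n,[g+g_E])$ separately (reading the left-hand side as $M\times N$, since $h$ lives on $N$). The guiding geometric fact is that on any fixed scale the stretched factor $(N,rh)$ becomes indistinguishable from flat $\re^n$: in normal coordinates centered at any $q_0\in N$, the pullback of $rh$ to a Euclidean ball $B_R(0)\subset\re^n$ converges in $C^2$ to $g_E$ at rate $O(1/r)$, and the scalar curvature satisfies $\mathrm{scal}(rh)=\mathrm{scal}(h)/r\to 0$. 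The total dimension and the conformal exponent $q=2(m+n)/(m+n-2)$ agree on both sides of the identity, so the Yamabe functional on $(M\times N,g+rh)$ converges pointwise (in test-function space) to the Yamabe functional on $(M\times\re^n, g+g_E)$.

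For the $\limsup$ inequality, fix $\varepsilon>0$ and pick a smooth compactly supported $u$ on $M\times\re^n$ with Yamabe quotient below $Y(M\times\re^n,[g+g_E])+\varepsilon$ and $\mathrm{supp}(u)\subset M\times B_R(0)$. For $r$ large, transplant $u$ to $\tilde u_r$ on $M\times N$ via the diffeomorphism $(p,x)\mapsto(p,\exp^{rh}_{q_0}(x))$ defined on that support. Because the metrics converge in $C^2$ on a fixed-size set, the gradient term, the scalar-curvature term and the $L^q$ norm of $\tilde u_r$ computed with respect to $g+rh$ all converge to their values for $u$ with respect to $g+g_E$. Hence $Y(\tilde u_r;g+rh)\to Y(u;g+g_E)$, which yields the $\limsup$ inequality after $\varepsilon\to 0$.

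For the $\liminf$ inequality, let $u_r$ be a Yamabe minimizer for $(M\times N,g+rh)$ with $\|u_r\|_{L^q}=1$; these exist by the solution of the Yamabe problem because, thanks to the $\limsup$ just established and the fact that $Y(M\times\re^n,[g+g_E])<Y(S^{m+n})$, the conformal class lies strictly below the round-sphere threshold for $r$ large. A concentration-compactness analysis then identifies the only admissible non-vanishing profile: the $u_r$ must concentrate in the $N$-direction at some $q_r\in N$ at a scale $\lambda_r\to 0$, and rescaling $(N,rh)$ around $q_r$ by $\lambda_r$ produces a flat Euclidean limit. A subsequence of the rescaled $u_r$ then converges to a nonzero test function $u_\infty$ on $M\times\re^n$ satisfying $Y(u_\infty;g+g_E)\le\liminf Y(u_r;g+rh)$, giving the desired bound.

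The main obstacle is the $\liminf$ direction, specifically ruling out competing concentration modes: one must show that $u_r$ cannot concentrate purely in the $M$-direction (which in the limit would produce a quotient controlled by $Y(M,[g])$ times a volume factor, contradicting the upper bound for suitable $M,N$) and cannot spread out to vanish uniformly (forbidden because $\|u_r\|_{L^q}=1$ while the gradient energy remains bounded). Lions-type concentration-compactness, combined with the scale invariance of the conformal Laplacian on $\re^n$ and the $C^2$ convergence of the metrics, handles these alternatives and guarantees that the rescaled PDE limit satisfies exactly the Yamabe equation on $(M\times\re^n, g+g_E)$, closing the argument.
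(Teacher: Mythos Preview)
The paper does not prove this theorem: it is quoted verbatim as Theorem~1.1 of \cite{AFP} (Akutagawa--Florit--Petean) and used as a black box to deduce Corollary~\ref{Yamabe2}. There is therefore no proof in the present paper to compare your proposal against.

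On its own merits, your outline follows the expected strategy and is essentially the one carried out in \cite{AFP}: the $\limsup$ direction by transplanting compactly supported test functions through normal coordinates on the $(N,rh)$ factor is correct and routine. For the $\liminf$ direction your sketch identifies the right mechanism (take Yamabe minimizers $u_r$, use Aubin's strict threshold to guarantee their existence for large $r$, then run a concentration--compactness argument to extract a limiting competitor on $M\times\re^n$), but two points deserve care. First, the strict inequality $Y(M\times\re^n,[g+g_E])<Y(S^{m+n})$ is not automatic; in \cite{AFP} it is obtained from the positivity of $s_g$ via a suitable test function, and you should flag that this is where the hypothesis $s_g>0$ enters. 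Second, your dichotomy analysis is where the real work lies: one must quantify why the minimizers cannot concentrate at a point of $M$ (producing a spherical bubble with energy $Y(S^{m+n})$, contradicting the upper bound) and cannot dissipate; in \cite{AFP} this is handled by a careful local energy estimate rather than by invoking Lions abstractly. Your sketch is on the right track but would need those two ingredients made explicit to be a proof.
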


Corollary \ref{Yamabe2}  follows immediately from Corollary \ref{Yamabe} and Theorem 1.1 in \cite{AFP}, by noting that 

$$Y(M\times N)\geq \sup_{\{r>0\}} Y(M\times N,[g+rh])\geq \lim_{r\rightarrow \infty}Y(M\times \re^n,[g+rh])=Y(M\times \re^n,[g+g_E]).$$

\end{document}